\newcommand{\N}{\ensuremath{\mathbb{N}}}
\newcommand{\G}{\ensuremath{\mathbb{G}}}
\newcommand{\R}{\ensuremath{\mathbb{R}}}
\newcommand{\C}{\ensuremath{\mathbb{C}}}
\renewcommand{\d}{\mathrm{d}}
\renewcommand{\O}{\mathcal{O}}
\newcommand{\p}{\mathbf{p}}
\newcommand{\q}{\mathbf{q}}
\renewcommand{\Re}{\mathrm{Re}\,}
\renewcommand{\Im}{\mathrm{Im}\,}
\newcommand{\E}{{\mathbf{E}}}
\newcommand{\F}{{\bf F}}
\renewcommand{\div}{\mathrm{div}}
\newcommand{\curl}{\mathrm{curl}\,}
\newcommand{\x}{\mathbf{x}}
\newcommand{\y}{\mathbf{y}}
\newcommand{\z}{\mathbf{z}}
\newcommand{\bw}{\mathbf{w}}
\newtheorem{defi}{Definition}
\newtheorem{lemma}[defi]{Lemma}
\newtheorem{theorem}[defi]{Theorem}
\newtheorem{remark}[defi]{Remark}
\begin{document}
\title{A direct reconstruction method  for radiating  sources in Maxwell’s equations with single-frequency data}

\author{Isaac Harris\thanks{ Department of Mathematics, Purdue University, West Lafayette, IN 47907; (\texttt{harri814@purdue.edu})} \and Thu Le\thanks{Department of Mathematics, University of Wisconsin-Madison, Madison, WI 53706; (\texttt{tle38@wisc.edu})} \and Dinh-Liem Nguyen\thanks{Department of Mathematics, Kansas State University, Manhattan, KS 66506; (\texttt{dlnguyen@ksu.edu})}   
}
\date{}
\maketitle

\begin{abstract}
This paper presents a fast and robust numerical method for reconstructing point-like sources in the time-harmonic Maxwell's equations given Cauchy data at a fixed frequency. This is an electromagnetic inverse source problem with broad applications, such as antenna synthesis and design, medical imaging, and pollution source tracing.  We introduce new imaging functions and a computational algorithm to determine the number of point sources, their locations, and associated moment vectors, even when these vectors have notably different magnitudes. The number of sources and locations are estimated using significant peaks of the imaging functions, and the moment vectors are computed via explicitly simple formulas.  The theoretical analysis and stability of the imaging functions are investigated, where the main challenge lies in analyzing the behavior of the dot products between the columns of the imaginary part of the Green's tensor and the unknown moment vectors. Additionally, we extend our method to reconstruct small-volume sources using an asymptotic expansion of their radiated electric field.   
We provide numerical examples in three dimensions to demonstrate the performance of our method.

\end{abstract}
\sloppy

{\bf Keywords.}
  Inverse source problem, Maxwell's equations, Cauchy data, point sources, fixed frequency, sampling method

\bigskip
\section{Introduction}
Inverse source problems have a wide range of applications in scientific areas. In antenna synthesis and design, the problem involves determining the current distribution required to produce a desired radiation pattern \cite{Leone2018}. In medical imaging, such as electroencephalography (EEG) and magnetoencephalography (MEG), they help to reconstruct the electrical activity of the brain based on boundary measurements, thus enabling the diagnosis of internal abnormalities \cite{Hamalainen1993}. 
Inverse source problems are also used to trace pollution sources in the environment  \cite{Badia2002}. These problems have garnered significant attention from researchers over the past thirty years. For extensive results on the inverse source problems for the Helmholtz equation, see \cite{DZhang2019, Kress2013, Nara2008, Loc2023, Loc2019, Badia2019, Kui2019}. We are interested in the electromagnetic inverse source problem for the time-harmonic Maxwell’s equations at a fixed frequency. We aim to provide an effective numerical method for reconstructing the number of unknown point-like sources, along with their locations and possibly their moment vectors, from measured boundary Cauchy data. 

The presence of non-radiating sources introduces difficulty to time-harmonic inverse source problems for wave equations, particularly Maxwell's equations, potentially leading to nonunique solutions \cite{Dassios2005, Bleistein77, Hauer2005, Monk2006}. To address this challenge, an approach is to utilize multi-frequency boundary measurements. We direct the reader to recent studies that provide significant mathematical insights into the uniqueness, stability, and numerical findings of inverse source problems using multi-frequency data  \cite{Bao2020, Cheng2016, Wang2018, Valdivia2012, Bao2010, Isakov2021, Wang2019, Wang2022}. For the case of a single frequency data, it is necessary to impose additional constraints on the sources to obtain a unique solution to the inverse source problem. It was proved in \cite{Monk2006} that Cauchy data can uniquely determine multiple point sources or surface currents at a fixed frequency. 

Several numerical methods have been developed for reconstructing multiple electromagnetic point sources with single-frequency data. From a boundary integral-based relation between the Cauchy data and the unknown source term, the authors in \cite{Inui2005} developed a strategy to choose a certain ``vector-valued weighting function" in the boundary integral formula to develop an identification method for the number of sources, locations, and moment vectors. 
Nevertheless, the method poses a challenge in choosing the weighting function in its implementation. Moreover, based on a similar boundary integral-based relation, the authors in \cite{Badia2013, Abdelaziz2017} proposed a different way to choose the weighting function and studied an algebraic method to identify sources. This algebraic method relies on the rank of a Hankel matrix to determine the number of unknown sources.
However, if the rank is not low, it is challenging to determine this rank accurately because the values of the last singular values are small and quite close to each other.  We also refer to \cite{Griesmaier2018} for a two-step numerical method using a windowed Fourier transform to determine the support of small sources from far-field data, though this method does not address the reconstruction of the moment vectors of these sources.

In this paper, we study a numerical method that also relies on a boundary integral-based formula. By choosing a simple weighting function, we develop novel imaging functions and a numerical algorithm that allow us to determine the unknown sources without having to deal with the challenges faced by the methods in \cite{Inui2005, Badia2013, Abdelaziz2017}. Furthermore, our approach is easy to implement and avoids expensive iterative computations, resulting in a more efficient performance when working with a larger number of sources. It also demonstrates robustness against noisy data without the need for additional regularization techniques.
 {The method presented here is a non-iterative or direct method. Similar methods have been used to reconstruct small volume and extended scatterers via the MUSIC Algorithm and Factorization method \cite{Ammari2001,Ammari2007,Kirsch2004}. These methods need multi-static data as the scattered field is induced by an incident field.}
This work is motivated by our results in \cite{Harris2023} for the Helmholtz equation. 
However, the analysis of the new imaging functions is more challenging and requires an innovative approach due to the complicated behavior of the imaginary part of the  Green's tensor, compared to the free-space scalar Green's function discussed in \cite{Harris2023}. 
We analyze key properties of the Green's tensor and establish a theoretical justification for our imaging functions. Our functions are also flexible as they enable imaging at arbitrary distances from the data boundary. Detecting sources with significantly different magnitudes of moment vectors is another challenge, as weaker sources are dominated by stronger ones, making it very difficult to detect them. We introduce a fast algorithm for identifying sources with (possibly complex) moment vectors of varying magnitudes. Additionally, we define an alternative function for source reconstruction and compare it with the proposed one.

The remainder of this paper is organized as follows. Section \ref{se:pointsource} is devoted to the identification of point sources. Section \ref{se:smallballs} concerns detecting point-like sources with small-volume support. Numerical examples in 3D are presented in Section \ref{se:results} to validate our method. Finally, a conclusion is given in Section \ref{se:conclu}.

\section{Identification of electromagnetic point sources}
\label{se:pointsource}
This section focuses on the inverse problem for point sources governed by time-harmonic Maxwell's equations. We consider a set of $N\in\N$ electromagnetic point sources, each characterized by its location $\x_j \in \mathbb{R}^3$ and a nonzero moment vector $\p_j \in \mathbb{C}^3$ for $j=1,2, \ldots,N$. These point sources are represented by the Delta distributions $\delta_{\x_j}$. Let $k>0$ be the wavenumber. Suppose that the sources are well-separated, 
\begin{align}
    \label{assump:well-sep}
    \text{dist}_{i\ne j}(\x_i,\x_j)\gg \lambda,
\end{align}
where $\lambda=2\pi/k$ is the wavelength. \\ 
\newpage 
\noindent We assume that these point sources generate the radiated field $\E: \R^3\rightarrow \C^3$ satisfying the following model
\begin{align}
&\curl \curl \E - k^2\E =  -\sum_{j=1}^N \delta _{\x_j}\p_j, \quad \text{in } \R^3,\label{eq:maxwell}
\\
&\lim_{|\x|\rightarrow \infty}|\x|\left(\curl \E \times \frac{\x}{|\x|} - ik\E\right)=0,
    \label{eq:silver}
\end{align}
where the  Silver-Müller radiation condition  
\eqref{eq:silver}  is assumed to hold uniformly for all directions $\x/|\x|$.  {The forward problem of finding $\E$ for known sources is well-posed (see \cite{Monk2003}). We proceed under this assumption to study the inverse problem.}

Let us introduce the Green's tensor $\G(\x,\y)$ for the problem \eqref{eq:maxwell}-\eqref{eq:silver}
\begin{equation}
    \G(\x,\y) = \Phi(\x,\y)I_3+\frac{1}{k^2}\nabla_{\x} \div_{\x} (\Phi(\x,\y)I_3),
    \label{greentensor}
\end{equation}
where 
\begin{equation*} 
\label{green}
 \Phi(\x,\y)= 
\frac{e^{ik|\x-\y|}}{4\pi|\x-\y|},
\end{equation*}
for any $\x\ne \y$ in $\R^3$. Here, $I_3$ is the $3\times 3$ identity matrix. The operators $\nabla$ and $\div$ are applied to the tensor as column-wise. 

 It is well-known that the unique electric field $\E$ can be represented as
 
\begin{equation}
\E(\x) = \sum_{j=1}^N \G(\x,\x_j)\p_j, \quad \text{for }\x \in \R^3, \, \x \neq \x_j.
\label{reprep}
\end{equation}
Let $\Omega \subset \R^3$ be a  bounded and open domain with Lipschitz boundary ${\partial \Omega}$. Assume that source locations $\x_j\in \Omega$ for $j=1,2, \ldots,N$ and  that the radiated field $\E$ is measured on ${\partial \Omega}$. Denote by ${\bm{\nu}}$  the unit outward normal vector to ${\partial \Omega}$.  We are interested in solving the following inverse problem.
\vspace{0.3cm}

\textbf{Inverse problem.} Given the Cauchy data  $\mathbf{E}$ and $\curl \mathbf{E} \times \boldsymbol{\nu}$ measured on  $\partial \Omega$ at a fixed wavenumber $k > 0$,  determine the total number of unknown point sources $N$, along with their locations $\mathbf{x}_j$ and moment vectors $\mathbf{p}_j$ for $j = 1,2, \ldots,N$.
\vspace{0.3cm}

We now study a function that will serve as a base for our imaging functions to reconstruct unknown sources. This function is motivated by the result in~\cite{Harris2023}. 
Let $\z \in \R^3$ be sampling points and $\q\in \R^3$ be a fixed nonzero vector, we define
\begin{equation}
\label{eq:im}
    I(\z,\q) :=  \int_{\partial \Omega} \curl (\Im{\G}(\x,\z)\q)\times \bm{\nu} \cdot \E(\x)-\curl \E(\x) \times \bm{\nu}\cdot\Im{\G}(\x,\z)\q\,\d s(\x).
\end{equation}
\\
The following lemma is very useful for the analysis of our numerical method.
\begin{lemma} For any sampling point $\z\in \R^3$, the base function  $I(\z,\q)$ satisfies 
    \begin{align*}
         I(\z,\q)  = \sum_{j=1}^N \p_j\cdot \Im\G(\x_j,\z)\q.
    \end{align*}
    \label{theo1}
\end{lemma}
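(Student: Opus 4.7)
The plan is to apply vector Green's second identity to the radiated field $\E$ and the smooth test field $\v(\x) := \Im \G(\x, \z)\q$, then identify the resulting boundary expression as $I(\z, \q)$.

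First I would record two structural properties of $\v$. Since $\Im \Phi(\x,\z) = \sin(k|\x-\z|)/(4\pi|\x-\z|)$ is an entire real-analytic function of $\x$ (essentially a spherical Bessel function), the tensor $\Im \G(\cdot,\z)$ extends smoothly through $\z$, and so $\v \in C^\infty(\R^3)$. Moreover, because the distributional equation satisfied by $\G$ has real-valued right-hand side (proportional to $\delta_\z I_3$), taking imaginary parts gives
\begin{equation*}
   \curl\curl \v - k^2 \v = 0 \qquad \text{in } \R^3.
\end{equation*}
These two facts are what make $\v$ an admissible test field, even though $\v(\z)$ is evaluated at the same point as the singularity of the real part of $\G$.

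Next I would apply vector Green's second identity to $\E$ and $\v$ on $\Omega$. Because $\E$ has point singularities at the $\x_j$, I would first work in the excised domain $\Omega_\epsilon := \Omega \setminus \bigcup_j \overline{B_\epsilon(\x_j)}$, where both fields are smooth and both satisfy $\curl\curl(\cdot) - k^2(\cdot) = 0$ pointwise. The volume integrand cancels, and after rearranging scalar triple products the identity reduces to
\begin{equation*}
   I(\z,\q) \;=\; \sum_{j=1}^N \int_{\partial B_\epsilon(\x_j)} \bigl[\curl \v \times \hat{\bm{\nu}}_j \cdot \E \;-\; \curl \E \times \hat{\bm{\nu}}_j \cdot \v\bigr]\, ds,
\end{equation*}
where $\hat{\bm{\nu}}_j$ is the unit outward normal to $B_\epsilon(\x_j)$ (the sign coming from the orientation reversal on the inner boundary of $\Omega_\epsilon$). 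To pass to $\epsilon \to 0$, I would use the representation \eqref{reprep} to split $\E = \G(\cdot,\x_j)\p_j + \F_j$ near $\x_j$, with $\F_j$ smooth at $\x_j$ (collecting the contributions from the other sources). The smooth $\F_j$ paired against $\v$ contributes $O(\epsilon)$ on the small sphere and vanishes, while the pairing of $\G(\cdot,\x_j)\p_j$ with the smooth $\v$ is the classical reproducing-kernel pairing for the Maxwell Green's tensor and tends to $\p_j \cdot \v(\x_j)$ (this is the computation underlying the Stratton--Chu representation formula, see, e.g., \cite{Monk2003}). Summing over $j$ and substituting $\v(\x_j) = \Im \G(\x_j,\z)\q$ yields the stated identity.

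The main obstacle is exactly the small-ball limit. The tensor $\G(\cdot,\x_j)$ carries an $|\x-\x_j|^{-3}$ singularity from the $k^{-2}\nabla\div$ piece, while the surface measure of $\partial B_\epsilon(\x_j)$ is only $O(\epsilon^2)$, so individual terms in the integrand blow up like $\epsilon^{-3}$ on a surface of size $\epsilon^2$. The fact that the divergent contributions cancel and leave the clean limit $\p_j \cdot \v(\x_j)$ is the delicate bookkeeping that underlies the vector representation formula, and it relies crucially on the smoothness of $\v$ at $\x_j$ (which is why the choice $\v = \Im \G(\cdot,\z)\q$, rather than $\G(\cdot,\z)\q$, is essential).
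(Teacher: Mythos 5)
Your proof is correct and follows essentially the same route as the paper: apply Green's second identity for the curl-curl operator to $\E$ and $\v=\Im\G(\cdot,\z)\q$, using that $\Im\G(\cdot,\z)\q$ is smooth and solves the homogeneous equation while $\G(\cdot,\x_j)\p_j$ carries the $\delta$-source, then sum over $j$ by linearity. The only difference is presentational: the paper performs the volume integration distributionally (integrating $\delta(\x-\y)\p\cdot\Im\G(\x,\z)\q$ directly), whereas you make that step rigorous by excising small balls and passing to the limit via the Stratton--Chu reproducing property.
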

\begin{proof}
    Let $\y\in \R^3$ be a point in $\Omega$. Then, for any vectors $\p\in \C^3$ and $\q\in\R^3$, using integration by parts and the Divergence Theorem, we have
    \begin{align*}
        \int_{\partial \Omega} \curl (\Im{\G}(\x,\z)\q)\times \bm{\nu} \cdot \G(\x,\y)\p\text{ }\d s(\x) =  \int_\Omega &-\curl \curl (\Im{\G}(\x,\z)\q) \cdot \G(\x,\y)\p
        \\ &+ \curl (\Im{\G}(\x,\z)\q) \cdot \curl ({\G}(\x,\y)\p)\,\d\x 
    \end{align*}
    and 
     \begin{align*}
        -\int_{\partial \Omega} \curl (\G(\x,\y)\p)\times \bm{\nu}\cdot\Im{\G}(\x,\z)\q\text{ } \d s(\x) =  
        \int_\Omega &\curl \curl ({\G}(\x,\y)\p) \cdot \Im\G(\x,\z)\q
        \\ &- \curl ({\G}(\x,\y)\p) \cdot \curl (\Im{\G}(\x,\z)\q) \,\d\x .
    \end{align*}
    Adding these expressions side by side and using the following Green's tensor identities
    \begin{align*}
   & \curl_{\x} \curl_{\x} ({\G}(\x,\y)\p) - k^2{\G}(\x,\y)\p= \delta (\x-\y) \p,\\
   &       \curl_{\x} \curl_{\x} (\Im{\G}(\x,\z)\q) - k^2\Im\G(\x,\z)\q  = 0,
    \end{align*}
    we obtain that  
\begin{align}
&\int_{\partial \Omega} \curl (\Im{\G}(\x,\z)\q)\times \bm{\nu} \cdot \G(\x,\y)\p-\curl (\G(\x,\y)\p)\times \bm{\nu}\cdot\Im{\G}(\x,\z)\q \,\d s(\x) \notag \\
        &= \int_\Omega \curl \curl ({\G}(\x,\y)\p) \cdot \Im\G(\x,\z)\q -  \G(\x,\y)\p \cdot \curl \curl (\Im{\G}(\x,\z)\q) \,\d\x\notag 
        \\
       & = \int_\Omega   \left(\curl \curl ({\G}(\x,\y)\p) - k^2\G(\x,\y)\p
        \right) \cdot \Im\G(\x,\z)\q \,\d\x \notag \\
        &= \p \cdot \Im\G(\y,\z)\q. \label{eq:id}
    \end{align}
Now substituting $\y=\x_j$ and $\p=\p_j$ into  \eqref{eq:id} for $j=1,2, \ldots,N$, and 
by the representation of the radiated field $\E(\x)$ in (\ref{reprep}), we derive 
\begin{align*}
    I(\z,\q) &=\int_{\partial \Omega} \curl (\Im{\G}(\x,\z)\q)\times \bm{\nu} \cdot \sum_{j=1}^N\G(\x,\x_j)\p_j-\curl \left(\sum_{j=1}^N\G(\x,\x_j)\p_j\right) \times \bm{\nu} \cdot \Im{\G}(\x,\z)\q \,\d s(\x)
    \\
    &= \sum_{j=1}^N\int_{\partial \Omega} \curl (\Im{\G}(\x,\z)\q)\times \bm{\nu}\cdot \G(\x,\x_j)\p_j-\curl (\G(\x,\x_j)\p_j) \times \bm{\nu}\cdot\Im{\G}(\x,\z)\q\, \d s(\x)
    \\&=\sum_{j=1}^N \p_j\cdot \Im\G(\x_j,\z)\q.
\end{align*}    
\end{proof}

The next lemma is important to the analysis of the imaging functions.
\begin{lemma} 
\label{lem:imgq} For any $\x=(x_1,x_2,x_3)^\top ,\y=(y_1,y_2,y_3)^\top$ and $\q=(q_1,q_2,q_3)^\top$ in $\R^3$, we have
    \begin{align}
    \Im\G(\x,\y)\q 
    = kj_0(k|\bw|)\frac{\q|\bw|^2-(\q\cdot \bw)\bw}{4\pi|\bw|^2}+\left(j_0(k|\bw|)-\cos(k|\bw|)\right)\frac{3(\q\cdot \bw)\bw-\q|\bw|^2}{4\pi k|\bw|^4} \label{imgqq}, 
\end{align}
where $\bw=\x-\y=(w_1,w_2,w_3)^\top$  {and $j_0(x) = \sin(x)/x$.  }
Furthermore,
$$\lim_{|\bw|\rightarrow 0} \Im\G(\x,\y)\q= \frac{k}{6\pi}\q:=\Im\G(\x,\x)\q.$$
{Consequently, the matrix $\Im\G(\x,\x)$ is invertible, and $$[\Im\G(\x,\x)]^{-1}=\frac{6\pi}{k}I_3.$$}
\end{lemma}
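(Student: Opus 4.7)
\textbf{Proof proposal for Lemma \ref{lem:imgq}.}

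The plan is to take the imaginary part in the definition of the Green's tensor and then compute derivatives explicitly. Starting from $\G(\x,\y) = \Phi(\x,\y)I_3+k^{-2}\nabla_{\x}\div_{\x}(\Phi(\x,\y)I_3)$, I would first observe that for a constant vector $\q\in\R^3$, the identity $\div_\x(\Phi\q)=\q\cdot\nabla_\x\Phi$ gives $\nabla_\x\div_\x(\Phi I_3)\,\q=\nabla_\x\nabla_\x\Phi\,\q=H_\x(\Phi)\q$, where $H_\x$ denotes the Hessian in $\x$. Since $\Im\Phi(\x,\y)=\sin(k|\bw|)/(4\pi|\bw|)=\frac{k}{4\pi}j_0(k|\bw|)$ is smooth across $\bw=0$, the operations commute with taking $\Im$, so
\begin{equation*}
\Im\G(\x,\y)\q \;=\; \tfrac{k}{4\pi}j_0(k|\bw|)\,\q \;+\; \tfrac{1}{k^2}\,H_\x(\Im\Phi)(\x,\y)\,\q.
\end{equation*}

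Next I would compute the Hessian using the radial structure. Writing $r=|\bw|$ and $f(r)=\sin(kr)/(4\pi r)$, the standard identity for a radial function yields
\begin{equation*}
H_\x(f(r))\,\q \;=\; \tfrac{f'(r)}{r}\Bigl(\q-\tfrac{(\q\cdot\bw)\bw}{r^2}\Bigr) + f''(r)\,\tfrac{(\q\cdot\bw)\bw}{r^2}.
\end{equation*}
A direct differentiation gives $f'(r)/r = -\frac{k}{4\pi r^2}(j_0(kr)-\cos(kr))$ and $f''(r)-f'(r)/r = -\frac{k^3}{4\pi}j_0(kr)+\frac{3k}{4\pi r^2}(j_0(kr)-\cos(kr))$. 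Substituting these into the expression above, dividing by $k^2$, and grouping the resulting scalar multiples of $\q$ and of $(\q\cdot\bw)\bw$ by the two factors $j_0(k|\bw|)$ and $j_0(k|\bw|)-\cos(k|\bw|)$ will yield exactly the formula \eqref{imgqq}. This bookkeeping is the only nontrivial step; everything else is symbolic manipulation.

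For the limit as $|\bw|\to 0$, I would parametrise $\bw=r\hat\bw$ with $|\hat\bw|=1$ so that $(\q\cdot\bw)\bw/|\bw|^2=(\q\cdot\hat\bw)\hat\bw$, and use the Taylor expansions $j_0(x)=1-x^2/6+O(x^4)$ and $\cos(x)=1-x^2/2+O(x^4)$, which give $j_0(kr)-\cos(kr)=k^2r^2/3+O(r^4)$. The first term of \eqref{imgqq} then tends to $\tfrac{k}{4\pi}\bigl(\q-(\q\cdot\hat\bw)\hat\bw\bigr)$, while the second term tends to $\tfrac{k}{12\pi}\bigl(3(\q\cdot\hat\bw)\hat\bw-\q\bigr)$; summing these, the directional terms cancel and I obtain $(k/6\pi)\q$, independent of $\hat\bw$. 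This justifies defining $\Im\G(\x,\x)=(k/6\pi)I_3$, whose inverse is obviously $(6\pi/k)I_3$.

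The main obstacle is the algebraic accounting in passing from $f'$, $f''$ to the explicit coefficients of the two tensor structures $\q|\bw|^2-(\q\cdot\bw)\bw$ and $3(\q\cdot\bw)\bw-\q|\bw|^2$; a minor but important subtlety is verifying, via the Taylor expansions, that the apparent $r^{-4}$ singularity in the second term of \eqref{imgqq} actually cancels so that the pointwise limit at $\bw=0$ exists and is direction-independent.
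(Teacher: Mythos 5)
Your proposal is correct and follows essentially the same route as the paper: take the imaginary part (legitimate since $\Im\Phi=\tfrac{k}{4\pi}j_0(k|\bw|)$ is smooth), compute the Hessian of the radial function to obtain \eqref{imgqq}, and Taylor-expand $j_0$ and $\cos$ to see that the direction-dependent terms cancel in the limit $|\bw|\to 0$, yielding $\tfrac{k}{6\pi}\q$. In fact you supply the explicit radial-Hessian bookkeeping ($f'(r)/r$ and $f''(r)-f'(r)/r$) that the paper compresses into ``a direct calculation,'' and your values check out.
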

\begin{proof}
    { We know that for $\bw\ne 0$, taking the imaginary part of the Green's tensor yields 
    \begin{align*}
         \Im\G(\x,\y) = \Im\Phi(\x,\y)I_3+\frac{1}{k^2}\Im \left(\nabla_{\x} \div_{\x} (\Phi(\x,\y)I_3)\right),
    \end{align*}
    where $$ \Phi(\x,\y) =   
        \frac{e^{ik|\bw|}}{4\pi|\bw|},\quad \quad \Im \Phi(\x,\y) =   
        \frac{k}{4\pi}j_0(k|\bw|),
$$ 
A direct calculation gives 
the formula in \eqref{imgqq}. We rewrite it as
\begin{align}
    \label{eq:pimgqq}
     \Im\G(\x,\y)\q  &= \frac{k}{6\pi} \q j_0 (k|\bw|)-\frac{k}{12\pi}\q \left( \frac{3\left(j_0(k|\bw|)-\cos(k|\bw|)\right)}{k^2|\bw|^2} - j_0(k|\bw|)
    \right) \notag
    \\ &\hspace{1.8 cm}+\frac{k \bw(\q\cdot \bw)}{4\pi |\bw|^2}\left( \frac{3\left(j_0(k|\bw|)-\cos(k|\bw|)\right)}{k^2|\bw|^2} - j_0(k|\bw|)
    \right).
\end{align}
Now, using Taylor expansion around $\bw=0$ gives
\begin{align*}
&j_0(k|\bw|)
=1-\frac{k^2|\bw|^2}{6}+\frac{k^4|\bw|^4}{120}+\mathcal{O}(|\bw|^6),\\
   & \frac{3\left(j_0(k|\bw|)-\cos(k|\bw|)\right)}{k^2|\bw|^2} - j_0(k|\bw|)
=\frac{k^2|\bw|^2}{15}-\frac{k^4|\bw|^4}{210}+\mathcal{O}(|\bw|^6).
\end{align*}
Substituting these expansions into \eqref{eq:pimgqq},  we then get 
    \begin{align}
 \Im\G(\x,\y)\q &= \frac{k}{6\pi} \q \left(1-\frac{k^2|\bw|^2}{6}+\mathcal{O}(|\bw|^4)\right)-\frac{k}{12\pi} \q \left( \frac{k^2|\bw|^2}{15}+\mathcal{O}(|\bw|^4)
    \right)\notag \\
      &\hspace{5.1cm}+\frac{3k^3\bw(\q\cdot \bw)}{12\pi }\left(  \frac{1}{15}+\mathcal{O}(|\bw|^2)
    \right) \notag \\
    &=\frac{k}{6\pi}  \q-\frac{k^3}{60\pi}\left(2 \q|\bw|^2-\bw(\q\cdot \bw)\right)  + \mathcal{O}(|\bw|^4). \label{taylor}
\end{align}
Thus, $$\lim_{|\bw|\rightarrow 0} \Im\G(\x,\y)\q= \frac{k}{6\pi}\q:= \Im\G(\x,\x)\q .$$ Choosing $\q={\bf e}_i$ where $i=1,2,3$, $ {\bf{e}}_1 := (1,0,0)^\top,{\bf{e}}_2 := (0,1,0)^\top$ and ${\bf{e}}_3 := (0,0,1)^\top$ gives us columns of matrix $\Im\G(\x,\x)$. That implies $$\Im\G(\x,\x)= \frac{k}{6\pi}I_3,$$ and its inverse matrix $$[\Im\G(\x,\x)]^{-1}= \frac{6\pi}{k}I_3.$$
}
\end{proof}
\begin{remark}  We can see from \eqref{imgqq} that  for any nonzero vector $\p\in \C^3$, the function $| \p \cdot \Im\G(\x,\y)\q|$ decays as $\y$ moves away from $\x$, with $| \p \cdot \Im\G(\x,\y)\q |= \O\left(\text{dist}(\x,\y)^{-1}\right)$.
By substituting  $\p=\p_j$, $\x=\x_j$ and $\y=\z$, for $j=1,2,\ldots, N$, into this dot product, and applying Lemma \ref{theo1}, alongside the triangular inequality, we obtain that
$$|I(\z,\q)| = \mathcal{O} \Big ( \text{dist}(\z,{\bf X})^{-1}\Big ), \text{ as } \text{dist}(\z,{\bf X}) \rightarrow \infty,$$ 
 where  ${\bf X} := \{\x_j : j=1, 2, \ldots, N\}$ represents the set of source locations.
 \label{re:Idecay}
 \end{remark}

In general, the function $I(\z,\q)$ does not always have peaks at source locations due to the complex behavior of  $\p \cdot \Im \G(\x,\z)\q$. For example, Figure \ref{fig:posfail}(b) shows that 
$|I(\z,\q)|$ 
with $\q=(1,1,1)^\top$ 
fails to correctly locate a point source. 
To improve this, for a fixed positive integer $s$, we propose a novel imaging function
\begin{align}
\widetilde I_s(\z) := |I(\z,{\bf{e}}_1)|^s +  |I(\z,{\bf{e}}_2)|^s +  |I(\z,{\bf{e}}_3)|^s.
\label{tildeI}
\end{align}
We later see (Theorem \ref{globaldominates}) that $s$ plays a crucial role in the resolution analysis of our imaging functions.

The function $I(\z, {\bf{e}}_i)$ is defined in \eqref{eq:im} with the choice $\q = {\bf{e}}_i$ for $i=1,2,3$. See Figure \ref{fig:posfail}(c) for an illustrative example.
\vspace{0.05cm}
\\
    To  investigate the behavior of $\widetilde I_s(\z)$, we refer to the expression obtained from Lemma \ref{theo1}, 
    \begin{align}
    \label{imzei}
        I(\z,{\bf{e}}_i)=\sum_{j=1}^N \p_j\cdot \Im \G(\x_{j},\z){\bf{e}}_i,\quad  \text{ for  }i=1,2,3.
    \end{align}
Hence, it is essential to study properties of  $\p_{} \cdot \Im \G(\x_{},\z){\bf{e}}_i$ for $i=1,2,3$. We begin by considering the case of real moment vectors. We later extend the analysis to study the case of complex moment vectors. We prove in Lemma \ref{atleat1peak} that for any nonzero vector $\p \in \R^3$, at least one of the terms 
$$|\p_{} \cdot \Im \G(\x_{},\z){\bf{e}}_1|^s,\quad |\p_{} \cdot \Im \G(\x_{},\z){\bf{e}}_2|^s,\quad \text{or} \quad |\p_{} \cdot \Im \G(\x_{},\z){\bf{e}}_3|^s$$ 
achieves its maximum at $\z=\x$ in a small neighborhood of $\x$. Ideally, if all three terms are maximal at $\z=\x$, their sum also attains maximum there in this neighborhood for any integer $s>0$, see Figure \ref{fig:Ie_all_peak}. Conversely, if one or two of the three terms reach a maximum at 
$\z=\x$ in a small neighborhood of $\x$, for some integer $s>0$, these maxima significantly dominate the other term(s) that do not peak at $\x$ in this neighborhood. Refer to Figure \ref{fig:Ie_one_peak} and Theorem \ref{globaldominates} for more details. As a result, for any nonzero vector $\p\in\R^3$ and  some integer $s>0$, 
\begin{align}
    |\p_{} \cdot \Im \G(\x_{},\z){\bf{e}}_1|^s+|\p_{} \cdot \Im \G(\x_{},\z){\bf{e}}_2|^s+  |\p_{} \cdot \Im \G(\x_{},\z){\bf{e}}_3|^s
   \notag
\end{align}
attains a maximum at $\z=\x$ in a small neighborhood of $\x$. This motivates the formulation of $\widetilde {I_s}(\z)$ in (\ref{tildeI}). 
\begin{figure}[ht]
    \centering
    \subfloat[ $|I(\z,\q)|$ with $\p=(-17,-7,-8)^\top$ ]
    {\begin{tikzpicture}
    \node[anchor=south west,inner sep=0] (image) at (0,0) {\includegraphics[width=5.1cm]{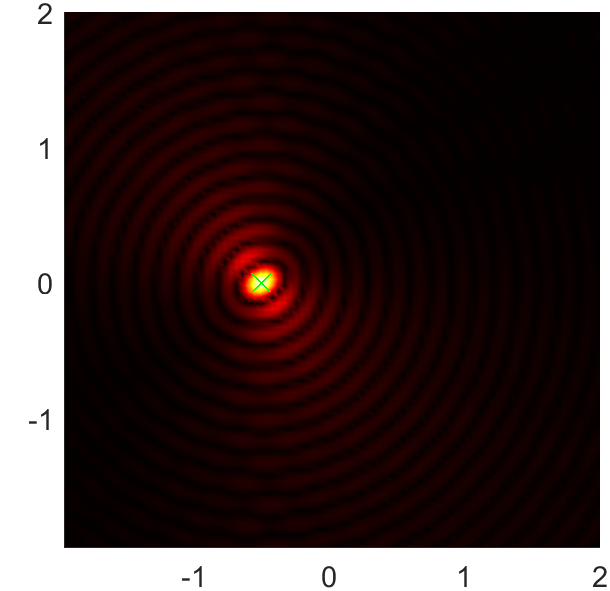}};
    \begin{scope}[x={(image.south east)},y={(image.north west)}]
      \node[anchor=north] at (.53,0) {\small{$x$}};
      \hspace{0.2cm}
      \node[anchor=south,rotate=90] at (0,.52) {\small{$y$}};
    \end{scope}
  \end{tikzpicture}  }
 \hspace{0.4cm}
\subfloat[$|I(\z,\q)|$ with $\p=(17,-7,-8)^\top$]
    {\begin{tikzpicture}
    \node[anchor=south west,inner sep=0] (image) at (0,0) {\includegraphics[width=5.1cm]{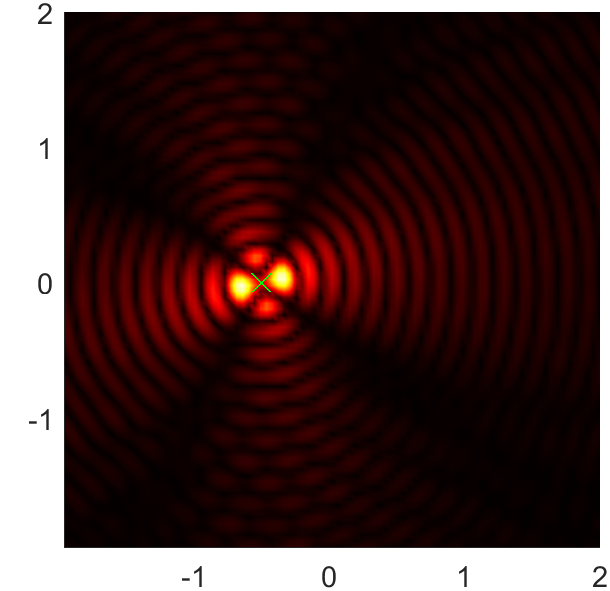}};
    \begin{scope}[x={(image.south east)},y={(image.north west)}]
      \node[anchor=north] at (.53,0) {\phantom{\scriptsize{$x$}}};
    \end{scope}
  \end{tikzpicture}  }
   \hspace{0.4cm}
\subfloat[$\widetilde I_{1}(\z)$ with $\p=(17,-7,-8)^\top$]
    {\begin{tikzpicture}
    \node[anchor=south west,inner sep=0] (image) at (0,0) {\includegraphics[width=5.1cm]{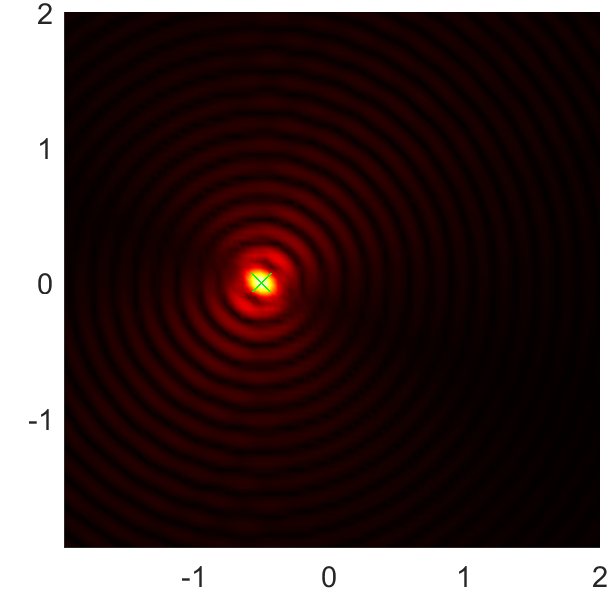}};
    \begin{scope}[x={(image.south east)},y={(image.north west)}]
      \node[anchor=north] at (.53,0) {\phantom{\scriptsize{$x$}}};
    \end{scope}
  \end{tikzpicture}  }
    \caption{Reconstruction results for a point source on $\{z=0.5\}$. True location at $(-0.5,0,0.5)^\top$ is marked by green crosses.\\
    (a) For $\p=(-17,-7,-8)^\top$, $|I(\z,\q)|$ peaks at $(-0.495, 0, 0.495)^\top$, relative error $1\%$. \\
    (b) For $\p=(17,-7-8)^\top$, $|I(\z,\q)|$ peaks at $(-0.462, -0.10, 0.375)^\top$, relative error $24.267\%$. \\
    (c) For $\p=(17,-7-8)^\top$, $\widetilde I_1(\z)$ peaks at $(-0.495, 0, 0.495)^\top$, relative error $1\%$.\\
    Here, $\z \in [-2,2]^3$, $k=20,\q =(1,1,1)^\top$. Computed locations are rounded to three decimal digits.
    }
    \label{fig:posfail}
\end{figure}
\begin{figure}[ht]
    \hspace{-0.3in}\subfloat[ $|\p \cdot \Im \G(\x,\z){\bf{e}}_1|^2$  ]
    {\begin{tikzpicture}
    \node[anchor=south west,inner sep=0] (image) at (0,0) {\includegraphics[width=4.5cm,height = 3.9 cm]{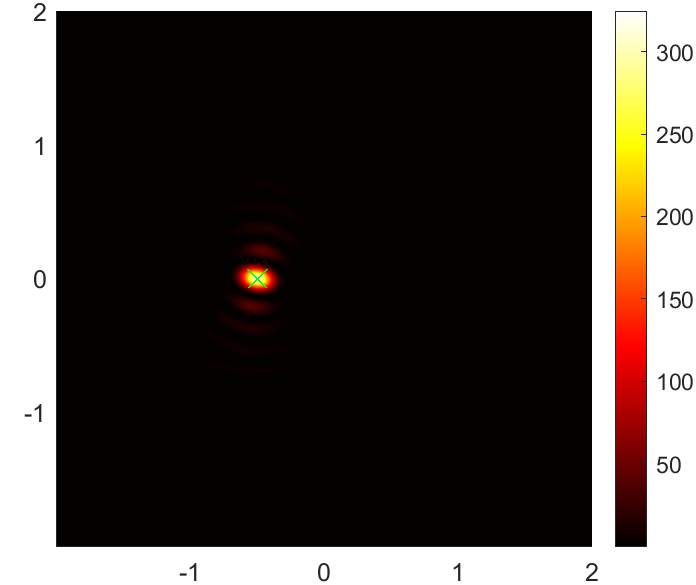}};
    \begin{scope}[x={(image.south east)},y={(image.north west)}]
      \node[anchor=north] at (.53,0) {\scriptsize{$x$}};
      \hspace{0.2cm}
      \node[anchor=south,rotate=90] at (0,.52) {\scriptsize{$y$}};
    \end{scope}
  \end{tikzpicture}  }
  \subfloat[$|\p \cdot \Im \G(\x,\z){\bf{e}}_2|^2$ ]
    {\begin{tikzpicture}
    \node[anchor=south west,inner sep=0] (image) at (0,0) {\includegraphics[width=4.5cm,height = 3.9 cm]{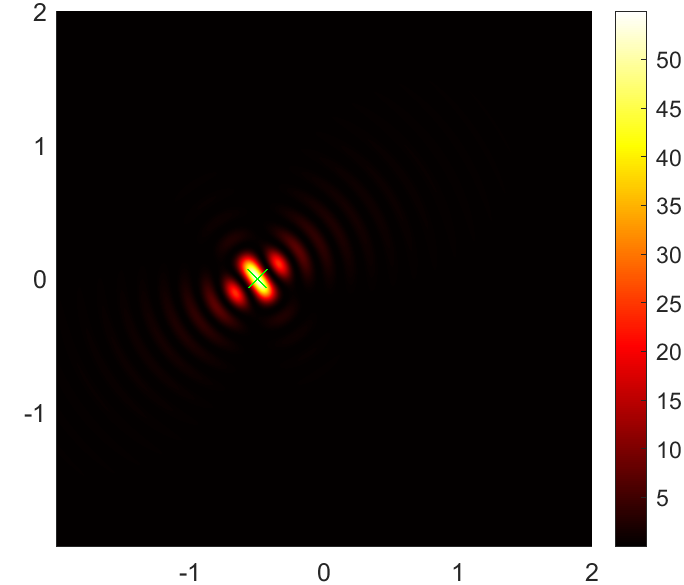}};
    \begin{scope}[x={(image.south east)},y={(image.north west)}]
      \node[anchor=north] at (.53,0) {\phantom{\scriptsize{$x$}}};
    \end{scope}
  \end{tikzpicture}  }
     \subfloat[$|\p \cdot \Im \G(\x,\z){\bf{e}}_3|^2$]
    {\begin{tikzpicture}
    \node[anchor=south west,inner sep=0] (image) at (0,0) {\includegraphics[width=4.5 cm,height = 3.9 cm]{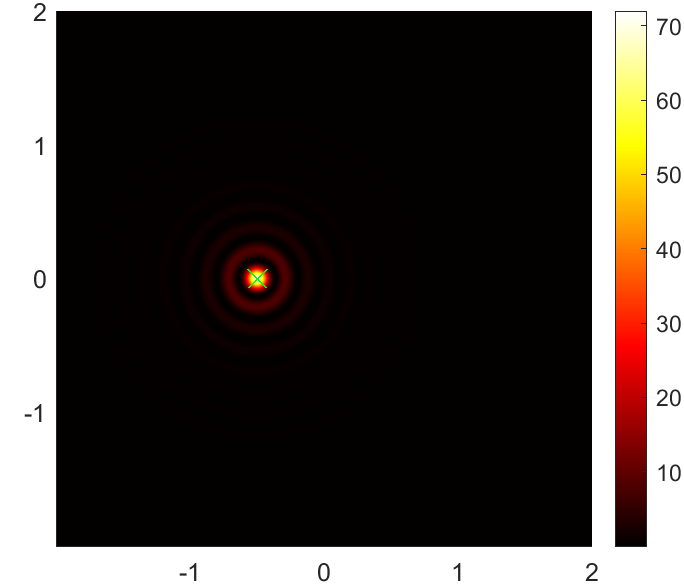}};
    \begin{scope}[x={(image.south east)},y={(image.north west)}]
      \node[anchor=north] at (.53,0) {\phantom{\scriptsize{$x$}}};
    \end{scope}
  \end{tikzpicture}  }
   \subfloat[$\widetilde I_2(\z)$ ]
    {\begin{tikzpicture}
    \node[anchor=south west,inner sep=0] (image) at (0,0) { \includegraphics[width=4.5 cm,height = 3.9 cm]{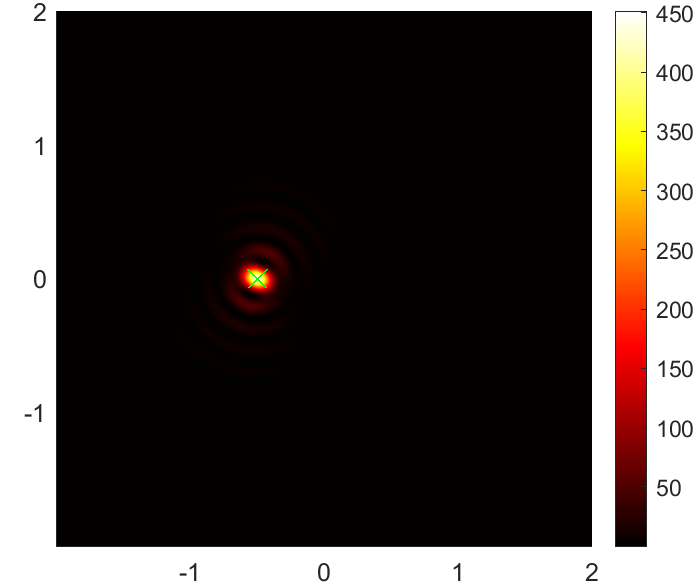}};
    \begin{scope}[x={(image.south east)},y={(image.north west)}]
      \node[anchor=north] at (.53,0) {\phantom{\scriptsize{$x$}}};
    \end{scope}
  \end{tikzpicture}  }
    \caption{ 
For $\p=(17,-7,-8)^\top$, all three terms $|\p \cdot \Im \G(\x,\z){\bf{e}}_1|^2$, $|\p \cdot \Im \G(\x,\z){\bf{e}}_2|^2$,  $|\p \cdot \Im \G(\x,\z){\bf{e}}_3|^2$ and their sum $\widetilde I_2(\z)$ attain their maximum at $\x=(-0.5,0,0.5)^\top$ in a small neighborhood of $\x$. 
    }
    \label{fig:Ie_all_peak}
\end{figure}
\begin{figure}[ht]
    \hspace{-0.3in}\subfloat[  $|\p \cdot \Im \G(\x,\z){\bf{e}}_1|^2$  ]
    {\begin{tikzpicture}
    \node[anchor=south west,inner sep=0] (image) at (0,0) {\includegraphics[width=4.5cm,height = 3.9 cm]{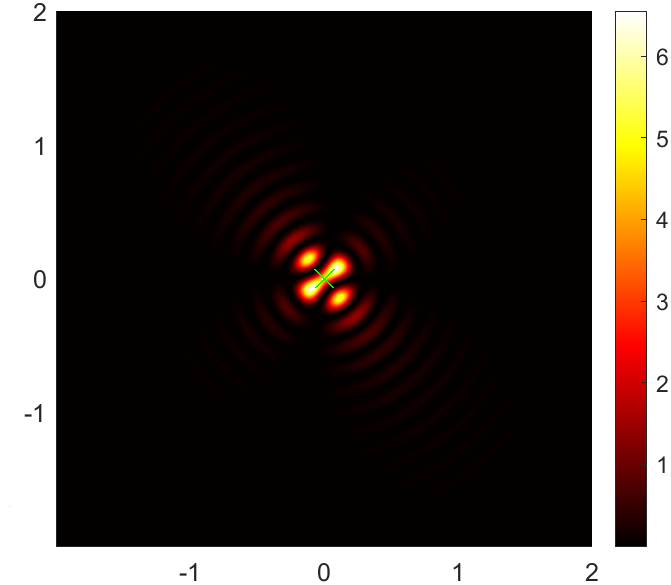}};
    \begin{scope}[x={(image.south east)},y={(image.north west)}]
      \node[anchor=north] at (.53,0) {\scriptsize{$x$}};
      \hspace{0.2cm}
      \node[anchor=south,rotate=90] at (0,.52) {\scriptsize{$y$}};
    \end{scope}
  \end{tikzpicture}  }
  \subfloat[$|\p \cdot \Im \G(\x,\z){\bf{e}}_2|^2$]
    {\begin{tikzpicture}
    \node[anchor=south west,inner sep=0] (image) at (0,0) {\includegraphics[width=4.5 cm,height = 3.9 cm]{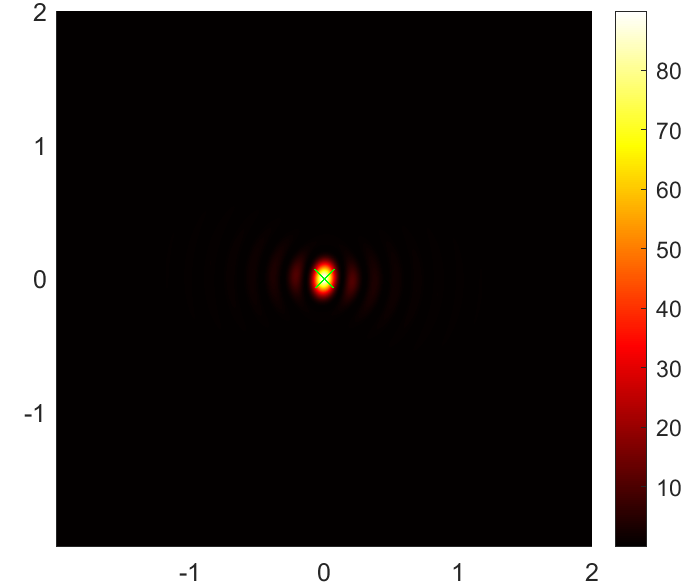}};
    \begin{scope}[x={(image.south east)},y={(image.north west)}]
      \node[anchor=north] at (.53,0) {\phantom{\scriptsize{$x$}}};
    \end{scope}
  \end{tikzpicture}  }
    \subfloat[$|\p \cdot \Im \G(\x,\z){\bf{e}}_3|^2$]
    {\begin{tikzpicture}
    \node[anchor=south west,inner sep=0] (image) at (0,0) {\includegraphics[width=4.5 cm,height = 3.9 cm]{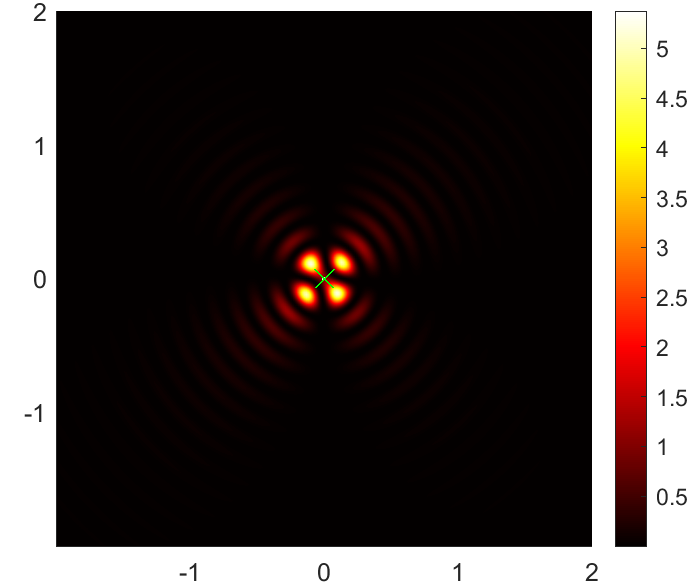}};
    \begin{scope}[x={(image.south east)},y={(image.north west)}]
      \node[anchor=north] at (.53,0) {\phantom{\scriptsize{$x$}}};
    \end{scope}
  \end{tikzpicture}  }
   \subfloat[$\widetilde I_2(\z)$]
    {\begin{tikzpicture}
    \node[anchor=south west,inner sep=0] (image) at (0,0) {\includegraphics[width=4.5 cm,height = 3.9 cm]{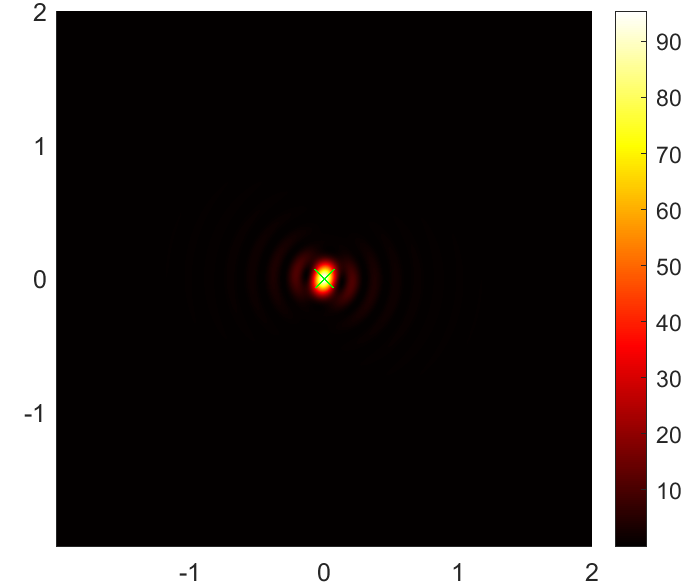}};
    \begin{scope}[x={(image.south east)},y={(image.north west)}]
      \node[anchor=north] at (.53,0) {\phantom{\scriptsize{$x$}}};
    \end{scope}
  \end{tikzpicture}  }
    \caption{ For $\p=(2,9,-1)^\top$, only one term $|\p \cdot \Im \G(\x,\z){\bf{e}}_2|^2$  attains its maximum at $\x=(0,0,0)^\top$ in a small neighborhood of $\x$, but dominates the values of the other terms that do not peak at $\x$. As a result, their sum $\widetilde I_2(\z)$ still attains a maximum at $\x$ in this neighborhood. 
    }
    \label{fig:Ie_one_peak}
\end{figure}
\begin{lemma}
\label{atleat1peak}
    Let  $\x\in \R^3$, and $\p=(p_1,p_2,p_3)^\top\in \R^3$ be nonzero. Then, for any  integer $s>0$, at least one of the three functions $|\p \cdot \Im \G(\x,\z){\bf{e}}_1|^s$, $|\p \cdot \Im \G(\x,\z){\bf{e}}_2|^s$,  $|\p \cdot \Im \G(\x,\z){\bf{e}}_3|^s$ attains its maximum at $\z=\x$ in a small neighborhood of $\x$.
\end{lemma}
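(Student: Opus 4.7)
The strategy is to exploit the Taylor expansion \eqref{taylor} from Lemma \ref{lem:imgq}, combined with an elementary AM-GM estimate, to show that the coordinate $i_0$ with maximal $|p_{i_0}|$ yields a strict local maximum of $|\p\cdot\Im\G(\x,\z){\bf{e}}_{i_0}|^s$ at $\z=\x$.

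First I would pick $i_0 \in \{1,2,3\}$ with $|p_{i_0}| = \max_j |p_j|$; since $\p \neq 0$, this forces $p_{i_0} \neq 0$. Writing $\bw := \x - \z$ and taking, without loss of generality, $i_0 = 1$, I substitute $\q = {\bf{e}}_1$ into \eqref{taylor} and take the inner product with $\p$ to get
\begin{align*}
f(\z) \,:=\, \p \cdot \Im\G(\x,\z){\bf{e}}_1 \,=\, \frac{k p_1}{6\pi} - \frac{k^3}{60\pi}\bigl(2 p_1 |\bw|^2 - w_1(\p\cdot\bw)\bigr) + \O(|\bw|^4).
\end{align*}
Since $f(\x) = k p_1/(6\pi) \neq 0$ and $f$ is continuous, it suffices to show that $f(\z)^2$ has a strict local maximum at $\z = \x$; then $|f|^s = (f^2)^{s/2}$ inherits this property by strict monotonicity of $t \mapsto t^{s/2}$ on $(0,\infty)$. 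Squaring the expansion and collecting orders gives
\begin{align*}
f(\z)^2 = \left(\frac{k p_1}{6\pi}\right)^2 - \frac{k^4}{180\pi^2}\,Q(\bw) + \O(|\bw|^4),
\end{align*}
where $Q(\bw) := 2p_1^2|\bw|^2 - p_1 w_1(\p\cdot\bw) = p_1^2 w_1^2 + 2 p_1^2(w_2^2 + w_3^2) - p_1 p_2 w_1 w_2 - p_1 p_3 w_1 w_3$.

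The core of the argument is the bound $Q(\bw) \ge \tfrac{1}{2} p_1^2 |\bw|^2$. Using $|ab| \le a^2/4 + b^2$ (which follows from $(a/2 - b)^2 \ge 0$) with $a = p_1 w_1$, $b = p_j w_j$, together with the maximality $p_j^2 \le p_1^2$, I obtain $|p_1 p_j w_1 w_j| \le p_1^2 w_1^2/4 + p_1^2 w_j^2$ for $j = 2, 3$; summing these and substituting yields the claim. Strict positivity of this quadratic lower bound dominates the $\O(|\bw|^4)$ remainder on a sufficiently small ball around $\x$, so $f(\z)^2 < f(\x)^2$ there for $\z \neq \x$, completing the proof.

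The main obstacle, I expect, is selecting the right AM-GM weighting: the symmetric split $|ab| \le (a^2 + b^2)/2$ is too coarse to absorb the cross terms into the diagonal coefficient $2p_1^2$ available on $w_j^2$. The asymmetric split $|ab| \le a^2/4 + b^2$ is precisely what leverages both that factor of two and the maximality $|p_j| \le |p_1|$; everything else is a routine Taylor computation.
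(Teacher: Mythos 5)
Your proposal is correct. It rests on the same Taylor expansion \eqref{taylor} as the paper's proof, but the selection rule and the estimate differ. The paper first shows (by summing and contradiction) that at least one of the three conditions $8p_i^2-p_j^2-p_k^2>0$ must hold, and then, working directly with $f(\z)=\p\cdot\Im\G(\x,\z)\mathbf{e}_i$, completes squares in the quadratic correction to obtain the two-sided bound $0\le f\le f(\x)$ (or its reverse for $p_i<0$), from which $|f|\le|f(\x)|$ follows. You instead take $i_0$ with $|p_{i_0}|$ maximal (a special case of the paper's condition, since $|p_{i_0}|=\max_j|p_j|$ gives $8p_{i_0}^2-p_j^2-p_k^2\ge 6p_{i_0}^2>0$), square $f$, and bound the resulting quadratic form $Q(\bw)$ from below by $\tfrac12 p_{i_0}^2|\bw|^2$ via the asymmetric estimate $|ab|\le a^2/4+b^2$; your computation checks out and even yields a \emph{strict} local maximum, which the paper does not claim. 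The trade-off is that the paper's argument establishes the conclusion for \emph{every} index $i$ satisfying $8p_i^2-p_j^2-p_k^2>0$, not only the maximal one, and it is precisely this stronger form that is invoked in the proof of Theorem \ref{globaldominates} (where two indices may simultaneously satisfy the condition and both must yield maxima). Your argument proves the lemma as stated, but to serve the later theorem it would need the minor extension of running your AM-GM with a general weight $\alpha$ (or simply adopting the paper's square-completion) so as to cover all indices meeting the weaker condition.
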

\begin{proof}
\label{proof1}
 First, we claim that the components of $\p$ satisfy at least one of the following conditions 
\begin{align}
\label{condi}
   & 8p_1^2-p_2^2-p_3^2> 0,\notag\\
\text{ or }  & 8p_2^2-p_3^2-p_1^2> 0,\\
\text{ or } & 8p_3^2-p_1^2-p_2^2> 0.\notag
\end{align}
Indeed, assume by contradiction that
\begin{align*}
    8p_1^2-p_2^2-p_3^2\leq 0, \quad 
    8p_2^2-p_3^2-p_1^2\leq 0, \quad \text{and }
    8p_3^2-p_1^2-p_2^2 \leq 0.
\end{align*}
Adding these inequalities, we get $6(p_1^2+p_2^2+p_3^2)\leq 0,$
which contradicts the fact that $\p\in \R^3$ is nonzero. Without loss of generality, assume $ 8p_1^2-p_2^2-p_3^2> 0$,  then $p_1 \ne 0$. 
Let $\mathbf{w}=\mathbf{x}-\mathbf{z}$.   We derive from    \eqref{taylor} that 
\begin{align}
\label{pimge}
    \p \cdot \Im\G(\x,\z)\q 
    &=\frac{k}{6\pi} \p \cdot \q-\frac{k^3}{60\pi}\left(2\p \cdot \q|\bw|^2-(\p \cdot \bw)(\q\cdot \bw)\right)  + \mathcal{O}(|\bw|^4), 
\end{align}
 Letting $\q= {\bf{e}}_1$ in \eqref{pimge},  we have 
\begin{align*}
    &\p \cdot \Im\G(\x,\z){\bf{e}}_1=p_1\frac{k}{6\pi} - \frac{k^3}{60\pi}\left(2p_1|\bw|^2-(\p\cdot \bw)w_1\right)+\mathcal{O}(|\bw|^4)\notag\\
    &=p_1\frac{k}{6\pi} +\mathcal{O}(|\bw|^4)\notag\\
    &\hspace{0.5cm}- \frac{k^3}{60\pi}\left( p_1\left(2w_2^2-2\sqrt{2}w_2\frac{p_2w_1}{2\sqrt{2}p_1}+\frac{p_2^2w_1^2}{8p_1^2}\right)+p_1\left(2w_3^2-2\sqrt{2}w_3\frac{p_3w_1}{2\sqrt{2}p_1}+\frac{p_3^2w_1^2}{8p_1^2}\right)+p_1w_1^2 \frac{8p_1^2-p_2^2-p_3^2}{8p^2_1}\right)\notag\\
    &=p_1\frac{k}{6\pi} - \frac{k^3}{60\pi}p_1\left(\sqrt{2}w_2-\frac{p_2w_1}{2\sqrt{2}p_1}\right)^2 - \frac{k^3}{60\pi}p_1\left(\sqrt{2}w_3-\frac{p_3w_1}{2\sqrt{2}p_1}\right)^2-\frac{k^3}{60\pi}p_1w_1^2 \frac{8p_1^2-p_2^2-p_3^2}{8p^2_1}+\mathcal{O}(|\bw|^4).
\end{align*}    
Together with $ 8p_1^2-p_2^2-p_3^2> 0$, when $|\mathbf{w}|$ is small enough, we derive 
\begin{align*}
    &0 \leq \p \cdot \Im\G(\x,\z){\bf{e}}_1 \leq p_1\frac{k}{6\pi}, \quad \text{ if } p_1> 0,\\
    &0 \geq \p \cdot \Im\G(\x,\z){\bf{e}}_1 \geq p_1  \frac{k}{6\pi},  \quad \text{ if } p_1< 0.
\end{align*}
That leads to $$|\p \cdot \Im\G(\x,\z){\bf{e}}_1| \leq |p_1|\frac{k}{6\pi},$$ in a small neighborhood of $\x$. Moreover,  thanks to Lemma \ref{lem:imgq}, we have $$|\p \cdot \Im\G(\x,\x){\bf{e}}_1|=|p_1|\frac{k}{6\pi},$$  
so $|\p \cdot \Im\G(\x,\z){\bf{e}}_1|$ attains its maximum at $\z=\x$ when $\z$ is in a small neighborhood of $\x$. Consequently, the same result holds for $|\p \cdot \Im\G(\x,\z){\bf{e}}_1|^s$ for  any positive integer $s$. \\
Similarly, if the condition $8p_2^2-p_3^2-p_1^2> 0$ or $8p_3^2-p_1^2-p_2^2> 0$ is satisfied, $|\p \cdot \Im\G(\x,\z){\bf{e}}_2|^s$ or $|\p \cdot \Im\G(\x,\z){\bf{e}}_3|^s$ is maximal at $\x$ when $\z$ is in a small neighborhood of $\x$, respectively.
\end{proof}
We recall from Lemma \ref{atleat1peak} that the components of a nonzero vector $\mathbf{p} \in \mathbb{R}^3$ satisfy at least one of the conditions in \eqref{condi}. This leads to the fact that the corresponding product term $|\mathbf{p} \cdot \Im\G(\mathbf{x}, \mathbf{z})\mathbf{e}_i|^s$ attains a maximum at $\z=\x$ in a small neighborhood of $\mathbf{x}$ for $i \in\{ 1, 2, 3\}$. The key theorem below justifies the behavior of the imaging functions by analyzing the important properties of these products.
\begin{theorem}
\label{globaldominates}
     Let  $\x\in \R^3$, $\p=(p_1,p_2,p_3)^\top\in \R^3$ be nonzero, and $s$ be an positive integer. If the two conditions in \eqref{condi} are satisfied, say $8p_1^2-p_2^2-p_3^2>0$ and $8p_2^2-p_1^2-p_3^2>0$, then in a small neighborhood of $\x$, 
     $|\p_{} \cdot \Im \G(\x_{},\z){\bf{e}}_1|^s$ and $|\p_{} \cdot \Im \G(\x_{},\z){\bf{e}}_2|^s$ attain their maximum at $\z= \x$, and
     \begin{align}
     \label{ine:dom}
|\p_{} \cdot \Im \G(\x_{},\z){\bf{e}}_3|^s \leq \frac{|\p_{} \cdot \Im \G(\x_{},\x){\bf{e}}_1|^s+|\p_{} \cdot \Im \G(\x_{},\x){\bf{e}}_2|^s}{2^{s/2+1}}.
\end{align}
Furthermore, if only one condition in \eqref{condi} is satisfied,  say $8p_1^2-p_2^2-p_3^2>0$, then in a small neighborhood of $\x$, $|\p_{} \cdot \Im \G(\x_{},\z){\bf{e}}_1|^s$ attains its maximum at $\z= \x$ and
\begin{align}
    |\p_{} \cdot \Im \G(\x_{},\z){\bf{e}}_2|^s + |\p_{} \cdot \Im \G(\x_{},\z){\bf{e}}_3|^s\leq \frac{2}{7^{s/2}}|\p_{} \cdot \Im \G(\x_{},\x){\bf{e}}_1|^s. 
    \label{ine:dom2}
\end{align}
Consequently, in this neighborhood, the sum $$|\p_{} \cdot \Im \G(\x_{},\z){\bf{e}}_1|^s+ |\p_{} \cdot \Im \G(\x_{},\z){\bf{e}}_2|^s+ |\p_{} \cdot \Im \G(\x_{},\z){\bf{e}}_3|^s$$  achieves its maximum at $\z=\x$ for all $s\geq s_0$, where  $s_0>0$ is sufficiently large.
\end{theorem}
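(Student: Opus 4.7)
The first assertion in each case---that $|\p \cdot \Im\G(\x,\z)\mathbf{e}_i|^s$ attains its maximum at $\z = \x$ for each $i$ satisfying $8p_i^2 - p_j^2 - p_k^2 > 0$---is an immediate application of Lemma \ref{atleat1peak}; by Lemma \ref{lem:imgq} the corresponding maximum value is $(|p_i|k/(6\pi))^s$.

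For the cross-bound \eqref{ine:dom} in Case 1, I would combine the failed condition $8p_3^2 \leq p_1^2 + p_2^2$ with the power-mean inequality $(p_1^2 + p_2^2)^{s/2} \leq 2^{s/2-1}(|p_1|^s + |p_2|^s)$, valid for $s \geq 2$, to obtain $|p_3|^s \leq (|p_1|^s + |p_2|^s)/2^{s+1}$. Via Lemma \ref{lem:imgq} this already yields the bound at $\z = \x$, with a factor-$2^{s/2}$ slack relative to the claimed denominator $2^{s/2+1}$, and that slack absorbs the $\O(|\bw|^2)$ Taylor correction from \eqref{taylor} in a small enough neighborhood of $\x$. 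For the cross-bound \eqref{ine:dom2} in Case 2, the two failed conditions $8p_2^2 \leq p_1^2 + p_3^2$ and $8p_3^2 \leq p_1^2 + p_2^2$, written in the variables $u = p_2^2$, $v = p_3^2$, cut out a convex polytope with extreme points $(0,0)$, $(p_1^2/8, 0)$, $(0, p_1^2/8)$, and $(p_1^2/7, p_1^2/7)$. Since $u^{s/2}+v^{s/2}$ is convex for $s\geq 2$, its maximum on this polytope is attained at $(p_1^2/7, p_1^2/7)$, giving $|p_2|^s + |p_3|^s \leq 2|p_1|^s/7^{s/2}$; combining with Lemma \ref{lem:imgq} and the same Taylor-neighborhood argument then yields \eqref{ine:dom2}.

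For the final ``consequently'' assertion I would expand each $A_i(\z) := |\p \cdot \Im\G(\x,\z)\mathbf{e}_i|^s$ to quadratic order in $\bw = \x - \z$ using \eqref{taylor}. For each dominant index $i$ the completed-squares manipulation in the proof of Lemma \ref{atleat1peak} gives $A_i(\z) = A_i(\x) - c_i(\bw) + \O(|\bw|^4)$ with $c_i$ a positive semidefinite quadratic form of size $\sim s|p_i|^{s-1}|\bw|^2$. For each non-dominant index the quadratic contribution is at worst of order $s|p_i|^{s-1}|\bw|^2$, and it is of order $|\bw|^{2s}$ in the degenerate case $p_i = 0$. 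Because the failed inequalities force the non-dominant $|p_i|$'s strictly below the dominant ones---in fact $|p_3| \leq \tfrac{1}{2}\max(|p_1|,|p_2|)$ in Case 1 and $\max(|p_2|,|p_3|) \leq |p_1|\sqrt{2/7}$ in Case 2---the ratio of corresponding $(s-1)$-th powers decays exponentially in $s$, so for $s \geq s_0$ sufficiently large the aggregate quadratic form governing $S(\z) - S(\x)$ is negative, making $\z = \x$ the local maximum of the full sum. I expect this last step to be the main obstacle, since the cross-bounds \eqref{ine:dom} and \eqref{ine:dom2} alone do not directly imply $S(\z) \leq S(\x)$; one really has to track the leading quadratic coefficients carefully to conclude.
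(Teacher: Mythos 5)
Your overall strategy is sound and in places goes a different way than the paper. For the first assertions and the bounds \emph{at} $\z=\x$ you argue exactly as one should (Lemma \ref{atleat1peak} plus Lemma \ref{lem:imgq}), and your power-mean derivation of $|p_3|^s\leq(|p_1|^s+|p_2|^s)/2^{s+1}$ and the convex-polytope maximization giving $|p_2|^s+|p_3|^s\leq 2|p_1|^s/7^{s/2}$ are clean replacements for the paper's algebra (the paper instead derives $p_3^2\leq p_1^2/7$ by substituting one failed inequality into the other). The real methodological difference is how the two arguments pass from $\z=\x$ to a neighborhood: you bound the perturbed value by ``value at $\x$ times a multiplicative correction'' and absorb the correction into slack, whereas the paper never compares to the value at $\x$ at all --- it completes squares in \eqref{taylor}, drops the sign-definite squares, and shows via the monotone auxiliary function $f(t)=t\frac{k}{6\pi}-t\frac{k^3w_3^2}{60\pi}+\frac{a k^3 w_3^2}{480\pi t}$ that $|\p\cdot\Im\G(\x,\z)\mathbf{e}_3|\leq\sqrt{(p_1^2+p_2^2)/8}\,\frac{k}{6\pi}$ uniformly on the neighborhood, a bound that is in general strictly larger than $|p_3|k/(6\pi)$. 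Your treatment of the final ``consequently'' claim is actually more honest than the paper's: the paper offers no argument there, and you are right that \eqref{ine:dom}--\eqref{ine:dom2} alone do not give $S(\z)\leq S(\x)$ (e.g.\ when $p_2=p_3=0$); your comparison of the dominant negative-definite quadratic form against the exponentially suppressed non-dominant contributions is the missing step and looks workable.

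The one concrete gap is in your Case 2 neighborhood extension. In Case 1 your pointwise bound at $\x$ beats the target by a factor $2^{s/2}$, so there is genuine room to absorb the $\mathcal{O}(|\bw|^2)$ Taylor correction. In Case 2 there is no such room: at the extreme vertex $p_2^2=p_3^2=p_1^2/7$ your polytope bound holds with \emph{equality}, $|p_2|^s+|p_3|^s=2|p_1|^s/7^{s/2}$, so ``the same Taylor-neighborhood argument'' has nothing left to absorb the correction, and the quadratic term $-\frac{k^3}{60\pi}\left(2p_2|\bw|^2-(\p\cdot\bw)w_2\right)$ is not sign-definite in general, so you cannot simply assert it does not increase $|\p\cdot\Im\G(\x,\z)\mathbf{e}_2|$. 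To close this you need the sign structure that the paper extracts: after completing squares, the only term that can push $\p\cdot\Im\G(\x,\z)\mathbf{e}_2$ above $p_2 k/(6\pi)$ is $-\frac{k^3}{60\pi}p_2w_2^2\frac{8p_2^2-p_1^2-p_3^2}{8p_2^2}$, and the monotonicity of $g$ shows the total can never exceed $\sqrt{(p_1^2+p_3^2)/8}\,\frac{k}{6\pi}\leq|p_1|k/(6\pi\sqrt{7})$, which is exactly the required uniform bound. (A second, minor point: your power-mean step $(p_1^2+p_2^2)^{s/2}\leq 2^{s/2-1}(|p_1|^s+|p_2|^s)$ needs $s\geq 2$, while the paper's $(c+d)^s\leq 2^{s-1}(c^s+d^s)$ covers $s=1$ as well.)
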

\begin{proof}
\label{proof2}
First, we consider the case
$$8p_1^2-p_2^2-p_3^2> 0, \quad 8p_2^2-p_1^2-p_3^2> 0 \quad \text {and} \quad 8p_3^2 - p_2^2 - p_1^2 \leq 0.$$
According to the \text{proof} of Lemma \ref{atleat1peak},
$|\p_{} \cdot \Im \G(\x_{},\z){\bf{e}}_1|^s$ and $|\p_{} \cdot \Im \G(\x_{},\z){\bf{e}}_2|^s$  attain their maximum at $\z=\x$ in a small neighborhood of $\x$ and
$$|\p_{} \cdot \Im \G(\x,\x){\bf{e}}_j|^s=\left(\frac{|p_j|k}{6\pi}\right)^s,\quad\text{ for } j=1,2.$$
Again, we derive from \eqref{taylor}
 with $\mathbf{q} = \mathbf{e}_3$ that
\begin{align}
    \p \cdot \Im\G(\x,\z){\bf{e}}_3=p_3\frac{k}{6\pi} - \frac{k^3}{60\pi}\left(2p_3|\bw|^2-(\p\cdot \bw)w_3\right)+\mathcal{O}(|\bw|^4).
    \label{imge3}
\end{align}
If $p_3=0$, then $|\p \cdot \Im\G(\x,\z){\bf{e}}_3| = \mathcal{O}(|{\bf w}|^2)$ as $|\bw| \rightarrow 0$. Thus, when $\z$ is in a small neighborhood of $\x$, 
we clearly obtain the estimate \eqref{ine:dom}. 
\\
Now, if $p_3\ne 0$, we rewrite \eqref{imge3} as
\begin{align*}
    \p \cdot \Im\G(\x,\z){\bf{e}}_3&=p_3\frac{k}{6\pi} - \frac{k^3}{60\pi}p_3\left(\sqrt{2}w_1-\frac{p_1w_3}{2\sqrt{2}p_3}\right)^2 - \frac{k^3}{60\pi}p_3\left(\sqrt{2}w_2-
    \frac{p_2w_3}{2\sqrt{2}p_3}\right)^2 \\ &-\frac{k^3}{60\pi}p_3w_3^2 \frac{8p_3^2-p_1^2-p_2^2}{8p^2_3}+\mathcal{O}(|\bw|^4).
\end{align*}
That yields, for sufficiently small $|\bw|$, 
\begin{align*}
    &0\leq \p \cdot \Im\G(\x,\z){\bf{e}}_3 \leq 
    p_3\frac{k}{6\pi} -\frac{k^3}{60\pi}p_3w_3^2 \frac{8p_3^2-p_1^2-p_2^2}{8p^2_3},\quad \text{ if } p_3>0,\\
     &0\geq \p \cdot \Im\G(\x,\z){\bf{e}}_3 \geq 
    p_3\frac{k}{6\pi} -\frac{k^3}{60\pi}p_3w_3^2 \frac{8p_3^2-p_1^2-p_2^2}{8p^2_3},\quad \text{ if } p_3<0.
\end{align*}
Recall that $8p_3^2 - p_2^2 - p_1^2 \leq 0$. Denote $a=p_1^2+p_2^2$ and  $f(t)= t\frac{k}{6\pi} -t\frac{k^3w_3^2}{60\pi}+\frac{1}{8t}\frac{ak^3w_3^2}{60\pi}$ with $t\ne 0$, then 
\begin{align}
    &0\leq \p \cdot \Im\G(\x,\z){\bf{e}}_3 \leq 
    f(p_3), \quad \text{ if } 0<p_3\leq \sqrt{\frac{a}{8}},\label{pimge3}\\
     &0\geq \p \cdot \Im\G(\x,\z){\bf{e}}_3 \geq 
    f(p_3), \quad \text{ if } -\sqrt{\frac{a}{8}}\leq p_3<0 .\label{pimge31}
\end{align}
Moreover, when $|\bw|$ is small enough, $$f'(t) = \frac{k}{6\pi}\left(1-\frac{k^2}{10}w^2_3\left(1+\frac{a}{8t^2}\right)\right)>0.$$
Thus, $f(t)$ is a strictly increasing function on  $(-\infty,0)$ and $(0,+\infty)$ when $\z$ is in a small neighborhood of $\x$. That leads to 
\begin{align}
    & f(p_3)\leq f\left(\sqrt{\frac{a}{8}}\right) =  \sqrt{\frac{p_1^2+p_2^2}{8}}\frac{k}{6\pi},\quad \text{ if } 0<p_3\leq \sqrt{\frac{a}{8}},\label{fp3}\\
    &f(p_3)\geq f\left(-\sqrt{\frac{a}{8}}\right) =  -\sqrt{\frac{p_1^2+p_2^2}{8}}\frac{k}{6\pi},\quad \text{ if } -\sqrt{\frac{a}{8}}\leq p_3<0.\label{fp31}
\end{align}
Combine together the inequalities \eqref{pimge3}--\eqref{fp31}, we get $$|\p \cdot \Im\G(\x,\z){\bf{e}}_3 | \leq \sqrt{\frac{p_1^2+p_2^2}{8}}\frac{k}{6\pi} \leq {\frac{|p_1|+|p_2|}{\sqrt{8}}}\frac{k}{6\pi}
={\frac{|\p \cdot \Im\G(\x,\x){\bf{e}}_1 |+|\p \cdot \Im\G(\x,\x){\bf{e}}_2 |}{\sqrt{8}}},$$
in a small neighborhood of $\x$. Then, using the fact that for any $c,d\geq 0$ and integer $s>0$,
$$(c+d)^s\leq 2^{s-1}(c^s+d^s),$$  we obtain estimate \eqref{ine:dom}. 
\\
Next, we consider the case $$8p_1^2-p_2^2-p_3^2>0,\quad 8p_2^2-p_1^2-p_3^2\leq 0 \quad \text{and} \quad 8p_3^2-p_1^2-p_2^2\leq 0.$$ 
Again, by the \text{proof} of Lemma \ref{atleat1peak}, 
$|\p_{} \cdot \Im \G(\x_{},\z){\bf{e}}_1|^s$  attain its maximum at $\z=\x$  in a small neighborhood of $\x$ and $$|\p_{} \cdot \Im \G(\x,\x){\bf{e}}_1|^s =\left(\frac{|p_1|k}{6\pi}\right)^s.$$
We also derive from \eqref{taylor}
 with $\mathbf{q} = \mathbf{e}_2$ that
\begin{align*}
    \p \cdot \Im\G(\x,\z){\bf{e}}_2=p_2\frac{k}{6\pi} - \frac{k^3}{60\pi}\left(2p_2|\bw|^2-(\p\cdot \bw)w_2\right)+\mathcal{O}(|\bw|^4).
\end{align*}
If $p_2= 0$, then $|\p \cdot \Im\G(\x,\z){\bf{e}}_2| = \mathcal{O}(|{\bf w}|^2)$ as $|\bw| \rightarrow 0$. Thus, in a small neighborhood of $\x$, we clearly have $ |\p_{} \cdot \Im \G(\x_{},\z){\bf{e}}_2|^s \leq \frac{1}{7^{s/2}}|\p_{} \cdot \Im \G(\x_{},\x){\bf{e}}_1|^s.$
\vspace{0.1cm}\\
Similarly, if $p_2\ne 0$, for sufficiently small $|\bw|$, we obtain
\begin{align*}
    &0\leq \p \cdot \Im\G(\x,\z){\bf{e}}_2 \leq 
    p_2\frac{k}{6\pi} -\frac{k^3}{60\pi}p_2w_2^2 \frac{8p_2^2-p_1^2-p_3^2}{8p^2_2},\quad \text{ if } p_2>0,\\
     &0\geq \p \cdot \Im\G(\x,\z){\bf{e}}_2 \geq 
    p_2\frac{k}{6\pi} -\frac{k^3}{60\pi}p_2w_2^2 \frac{8p_2^2-p_1^2-p_3^2}{8p^2_2},\quad \text{ if } p_2<0.
\end{align*}
Recall that $8p_2^2 - p_1^2 - p_3^2 \leq 0$. Let $b=p_1^2+p_3^2$ and $g(t)= t\frac{k}{6\pi} -t\frac{k^3w_2^2}{60\pi}+\frac{1}{8t}\frac{bk^3w_2^2}{60\pi}$ with $t\ne 0$. Again,  \vspace{0.1cm}\\
we can show that $g$ is strictly increasing on $(-\infty,0)$ and $(0,+\infty)$ when $|\bw|$ is small enough. 
Thus,  
\begin{align}
\label{mainineq}
&|\p \cdot \Im\G(\x,\z){\bf{e}}_2 | \leq \sqrt{\frac{p_1^2+p_3^2}{8}}\frac{k}{6\pi},
\end{align}
within this neighborhood. Additionally, the inequalities $p_2^2\leq \frac{p^2_1+p_3^2}{8}$ and $p_3^2\leq \frac{p^2_1+p_2^2}{8}$ imply that  $p_3^2\leq \frac{1}{7}p_1^2$. Together with $|\p \cdot \Im\G(\x,\x){\bf{e}}_1 |=|p_1|\frac{k}{6\pi}$, and for any integer $s>0$, we derive from \eqref{mainineq} 
\begin{align}
    |\p_{} \cdot \Im \G(\x_{},\z){\bf{e}}_2|^s \leq \frac{1}{7^{s/2}}|\p_{} \cdot \Im \G(\x_{},\x){\bf{e}}_1|^s, \label{estimateforp2}
\end{align}
in a small neighborhood of $\x$. Similarly, we obtain 
\begin{align*}
    |\p_{} \cdot \Im \G(\x_{},\z){\bf{e}}_3|^s \leq \frac{1}{7^{s/2}}|\p_{} \cdot \Im \G(\x_{},\x){\bf{e}}_1|^s
\end{align*}
which, together with  estimate \eqref{estimateforp2}, implies \eqref{ine:dom2}.
\end{proof}
\begin{remark}
\label{remarkrealp}
    We rewrite the imaging function $\widetilde {I_s}(\z)$ using Lemma \ref{theo1} as 
    $$\widetilde {I_s}(\z)=\sum_{i=1}^3\left| \sum_{j=1}^N  \p_j \cdot \Im \G(\x_j,\z){\bf{e}}_i
\right|^s.$$ 
Under the well-separated sources assumption \eqref{assump:well-sep}, we have $ \Im \G(\x_j,\x_i)\approx 0$ for $i\ne j$. If all real moment vectors $\p_j$ have comparable magnitudes, then in a small neighborhood of $\x_j$ for $j=1,2,\hdots,N$, 
\begin{align*}
    \widetilde {I_s}(\z)\approx \sum_{i=1}^3\left|    \p_j \cdot \Im \G(\x_j,\z){\bf{e}}_i
\right|^s.
\end{align*}
By applying Theorem \ref{globaldominates} with $\p=\p_j$ and $\x=\x_j$, $\widetilde {I_s}(\z)$  has a maximum at $\z=\x_j$ in this neighborhood for some integer $s>0$. 
In addition, from Remark \ref{re:Idecay}, we obtain that $\widetilde I_s(\z)$ decays quickly as $\z$ moves away from the sources  up to the leading order
$$ \widetilde {I^\text{}_s}(\z)= \mathcal{O} \Big ( \text{dist}(\z,{\bf X})^{-s} \Big ), \quad \text{as} \quad \text{dist}(\z,{\bf X}) \rightarrow \infty,$$ 
where  ${\bf X} = \{\x_j : j=1, 2, \ldots, N\}$. 
Thus, $\widetilde {I^\text{}_s}(\z)$ is expected to have peaks at $\z=\x_j$ for comparable $|\p_j|$. We then determine $N$ based on the number of peaks and $\x_j$ by the locations of these peaks. Conversely, if some vectors $ \p_j$ significantly exceed others in magnitude, the value of $\widetilde {I^\text{}_s}(\z)$ at these corresponding sources will dominate those of the other sources, resulting in significant peaks only at these locations.
This poses a challenge to detecting sources with notably smaller $|\p_j|$.
\end{remark}
To address this, we present a numerical algorithm for fast and accurate identification of sources with real moment vectors whose magnitudes are not comparable. The main idea is to iteratively update the imaging functions, turning cases where sources differ notably in magnitude to comparable ones. Here is a detailed explanation of the algorithm.
\vspace{0.3cm}


We assume without the loss of generality that $|\p_1|\geq |\p_2|\geq \hdots\geq |\p_N|$. First, as mentioned earlier,  
$\widetilde {I^\text{}_s}(\z)$ for some integer $s>0$ has significant peaks at $\x_j$ with $| \p_j|$ dominating those of the other sources and are comparable to each other for $j=1,\hdots, t_1$, where $t_1 \in \{1,2,\hdots,N\}$. These peaks allow us to identify these source locations. 
We then compute their moment vectors. Again, under the assumption of well-separated sources \eqref{assump:well-sep}, $\Im\G(\x_j,\x_l)\q\approx 0$ for any $j\ne l$. By Lemma \ref{theo1} and the fact that $|\p_j|$ dominates the magnitude of the other moment vectors,
we can estimate
$$  I(\x_{j},\q)\approx    \p_{j}\cdot \Im\G(\x_{j},\x_{j})\q \quad \text{ for } j=1,\hdots, t_1,$$
for a nonzero vector $\q\in\R^3$. Thanks to Lemma \ref{lem:imgq}, we can choose  $$\q = [\Im\G(\x_{j},\x_{j})]^{-1}{\bf{e}}_i=\frac{6\pi}{k}{\bf{e}}_i \quad \text{ for } i=1,2,3.$$ 
With this choice, the $i$-th component $(  \p_{j})_i$ of the moment vector $  \p_{j}$ can be computed as 
$$(  \p_{j})_i\approx   I(\x_{j},\q) \quad \text{ for } j=1,\hdots, t_1\text{ and } i=1,2,3.$$ 
Here, $I(\x_{j},\q)$ is computed by the definition in \eqref{eq:im} provided the Cauchy measurements, an approximation of $\x_{j}$ and $\q = \frac{6\pi}{k}\mathbf{e}_i$.
\\
Next, we update the imaging function $\widetilde {I^\text{}_s}(\z)$ as follows
\begin{align*}
    \widetilde {I}_{s,1}(\z) &:= \sum_{i=1}^3\left|  I(\z,{\bf{e}}_i)-\sum_{j=  1}^{  {t_1}}  \p_{j} \cdot \Im \G(\x_{j},\z)\mathbf e_i\right|^s\\
    &:= \sum_{i=1}^3\left|  I_1(\z,{\bf{e}}_i)\right|^s.
\end{align*}
This way, we filter out dominant terms identified explicitly in the previous step and obtain 
$$ \widetilde {I}_{s,1}(\z) = \sum_{i=1}^3\left|\sum_{j=  {t_1+1}}^{  {N}}  \p_{j} \cdot \Im \G(\x_{j},\z)\mathbf e_i\right|^s.$$ 
We see that this updated imaging function retains a structure similar to $\widetilde {I^\text{}_s}(\z)$ and therefore exhibits similar behavior. Suppose that among the remaining sources, those at $\x_j$ for $j=t_1+1,\hdots t_2$, where $t_2\in \{1,2,\hdots,N\}$, have $|  \p_j|$ that are notably larger than those of the other sources and comparable among themselves. 
Then, the updated imaging function $\widetilde {I}_{s,1}(\z)$ has peaks at these $\x_j$ for some integer $s>0$, providing estimations for these locations. Moreover, the $i$-th component $(  \p_{j})_i$ of  $  \p_{j}$ can be computed as 
$$(  \p_{j})_i\approx    I(\x_{j},\q)- \sum_{l=  {1}}^{  {t_1}}   \p_{l} \cdot \Im \G(\x_{j},\x_{l})\q = I_1(\x_j,\q) \quad \text{ for } j=t_1+1,\hdots, t_2\text{ and } i=1,2,3, $$  
where $\q = \frac{6\pi}{k}\mathbf{e}_i$. Again, this is because of the domination of 
$|  \p_j|$ and the assumption of well-separated sources \eqref{assump:well-sep}. 
Here, $I(\x_{j},\q)$ is computed by \eqref{eq:im} based on the Cauchy measurements, an approximation of $\x_{j}$ and $\q = \frac{6\pi}{k}\mathbf{e}_i$.
\vspace{0.1cm}
\\
We repeat this process to determine the other sources. The general formula of the updated imaging function to determine sources at $\x_j$ with $|  \p_j|$  dominating those of the remaining sources and are comparable to each other for $j=t_{n}+1, \hdots, t_{n+1}$, where $t_{n+1}\in \{1,2,\hdots,N\}$, is given by \begin{align*} 
  \widetilde {I}_{s,n}(\z) &:= \sum_{i=1}^3\left|  I_{n-1}(\z,\mathbf{e}_i) -\sum_{j=  {t_{n-1}+1}}^{  {t_{n}}}   \p_{j} \cdot \Im \G(\x_{j},\z)\mathbf{e}_i \right|^s\ := \sum_{i=1}^3\left|  I_n(\z,{\bf{e}}_i)\right|^s.\\
&= \sum_{i=1}^3\left|\sum_{j=  {t_{n}+1}}^{  N}   \p_{j} \cdot \Im \G(\x_{j},\z)\mathbf{e}_i\right|^s \quad \quad \text{ for } n\in \N, \text{ } n\geq 1 \text{ and } I_0:=I. 
\end{align*} 
The algorithm is summarized as follows.
\vspace{0.3cm}
\\
\textbf{Algorithm.}
\begin{itemize}
    \item \textit{ Step 1:} 
\begin{itemize}
        \item  Locate  sources at $\x_j$ for $j=1,\hdots,t_1$ using $\widetilde {I_s}(\z)$  for some integer $s>0$.
        \item Estimate the $i$-th component $( \p_{j})_i$ of moment vector $\p_j$ by $  I(\x_{j},\q)$ with $\q = \frac{6\pi}{k}{\bf{e}}_i$ for $i=1,2,3$ and $j=1,\hdots,t_1$.
    \end{itemize}    
    \item \textit{ Step $n$:} 
    \begin{itemize}
        \item Update the imaging function 
    \begin{align*}
    \widetilde {I}_{s,n}(\z) &:= \sum_{i=1}^3\left|  I_{n-1}(\z,{\bf{e}}_i)-\sum_{j=t_{n-1}+1}^{t_n}   \p_{j} \cdot \Im \G(\x_{j},\z)\mathbf e_i\right|^s = \sum_{i=1}^3|  I_n(\z,{\bf{e}}_i)|^s.
\end{align*} 
        \item Locate sources at $\x_j$ using $ \widetilde {I}_{s,n}(\z)$ for some integer $s>0$, and
        estimate $(  \p_{j})_i$ by $  I_n(\x_{j},\q)$ with $\q = \frac{6\pi}{k}{\bf{e}}_i$ for $i=1,2,3$ and $j=t_{n}+1, \hdots, t_{n+1}$.
    \end{itemize}    
\item  Stop when the updated imaging function no longer exhibits significant peaks.
\end{itemize}
     
    
Computing $\widetilde {I_s}(\z)$ at step $1$ is simple and fast. The updates of $\widetilde {I}_{s,n}(\z)$ in the next steps only need to evaluate the terms $\p_{j} \cdot \Im \G(\x_{j},\z)\q$, which can also be done quickly. 
\vspace{0.3cm}
{
\begin{remark}
In our numerical study, we observed that when 
 the difference in the magnitude of moment vectors between the strongest and weakest sources
is about six times or greater, the proposed algorithm may encounter a challenge that the updated imaging function at some step may inherit a major error from the previous step. The primary source of error in this case arises from a small neighborhood around a peak, which indicates the location of a source. In other words, the subtraction technique used in the algorithm does not 
totally
eliminate the small neighborhood around a peak. While this error may not significantly impact the identification of relatively strong sources, it can pose a substantial challenge when attempting to identify much weaker sources in subsequent steps.

One possible solution to address this issue is to ``clean up" the small neighborhoods (e.g., by applying suitable cutoff functions) around the sources identified in the previous step before proceeding to the next one in the algorithm. This clean-up process can help eliminate the residual errors after each step and is a reasonable approach, given our assumption that the sources are well-separated. 
\end{remark}
}

Next, we extend these results to reconstruct point sources with complex moment vectors. We propose two new imaging functions as follows
\begin{align}
&\widetilde {I}^\text{re}_s(\z) := |\Re I(\z,{\bf{e}}_1)|^s +  |\Re I(\z,{\bf{e}}_2) |^s +  |\Re I(\z,{\bf{e}}_3)|^s,\label{Icomplex1} \\
&
 \widetilde {I}^\text{im}_s(\z) := |\Im I(\z,{\bf{e}}_1) |^s +  |\Im I(\z,{\bf{e}}_2) |^s +  |\Im(I(\z,{\bf{e}}_3) |^s.
 \label{Icomplex}
\end{align}
Note that taking the real and imaginary part of the identity in Lemma~\ref{theo1}  yields  
\begin{align*}
    &\Re I(\z,{\bf{e}}_i) = \sum_{j=1}^N \Re \left(\p_j \cdot \Im \G(\x_j,\z){\bf{e}}_i \right)=\sum_{j=1}^N \Re \p_j \cdot \Im \G(\x_j,\z){\bf{e}}_i,\\
    &\Im I(\z,{\bf{e}}_i) = \sum_{j=1}^N \Im \left(\p_j \cdot \Im \G(\x_j,\z){\bf{e}}_i \right)=\sum_{j=1}^N \Im \p_j \cdot \Im \G(\x_j,\z){\bf{e}}_i,
    \quad \text{for} \quad i=1,2,3.
\end{align*}
 Since $\Re\p_j$ and $\Im \p_j$ are  in $\R^3$, they serve the same role as the real moment vectors $\p_j$ in the formula of $\widetilde I_s(\z)$ from \eqref{tildeI}--\eqref{imzei}. 
 Consequently, the analysis for $\widetilde I_s(\z)$ can be similarly applied to the two imaging functions defined in \eqref{Icomplex1} and \eqref{Icomplex}. In other words, as in Remark \ref{remarkrealp}, $\widetilde {I}^\text{re}_s(\z)$ exhibits significant peaks at $\z=\x_j$ for some integer $s>0$ and $j=1,2,\hdots,N$, when all $|\Re \p_{j}|$ are comparable. Similarly, 
  $\widetilde {I}^\text{im}_s(\z)$ displays significant peaks at $\z=\x_j$ for some integer $s>0$, when all $|\Im \p_{j}|$ are comparable. 
Furthermore, performing the same algorithm that we previously established for $\widetilde {I^\text{}_s}(\z)$, we can iteratively update the imaging function  $\widetilde {I}^\text{re}_s(\z)$ to find $\x_j$ and $\Re \p_{j}$. Similarly, $\Im \p_{j}$ (and also $\x_j$) is  computed using  the algorithm for $\widetilde {I}^\text{im}_s(\z)$.

\vspace{0.1cm} 

The processes of determining $(\x_j, \Re {\p_j}) $ and $ (\x_l, \Im {\p_l}) $ are done independently with different descending orders of $ |\Re \p_j| $ and $ |\Im \p_l| $.
When a real and an imaginary part share a similar computed location, we combine them to achieve a final reconstruction of the corresponding moment vector. If a computed location only appears from the process of finding either $\Re {\p_j}$  or $\Im {\p_j}$, the corresponding moment vector is then respectively determined as either $\Re {\p_j}$  or $\Im {\p_j}$.

\vspace{0.3 cm}

This method is robust against the presence of noise in data. In practice, data is always perturbed by noise. We assume that the noisy Cauchy measurements satisfy
\begin{align*}
   & \|\E(\x)-\E_\delta(\x)\|_{L^2(\partial\Omega)}\leq \delta_1\|\E(\x)\|_{L^2(\partial\Omega)},\\
   & \|\curl \E(\x)\times \bm{\nu} -\curl\E_\delta(\x)\times \bm{\nu} \|_{L^2(\partial\Omega)}\leq \delta_2\|\curl \E(\x)\times \bm{\nu}\|_{L^2(\partial\Omega)},
\end{align*}
for some constants $\delta_1,\delta_2>0$ presenting the level of noise. We can derive the following stability estimates. 
\begin{theorem} Denote by $I^\delta(\z,\q)$ the function $I(\z,\q)$ with the noisy Cauchy data. Then, for any sampling point $\z\in \R^3$, a fixed integer $s >0$, and a fixed nonzero vector $\q \in \R^3$,
\begin{align*}
    \left| \left|I(\z,\q)\right|^s-|I^\delta(\z,\q)| ^s\right| \leq C_1\max{(\delta_1,\delta_2)},\quad \text{ as } \max{(\delta_1,\delta_2)}\rightarrow 0,
\end{align*}
for some constant $C_1>0$ independent of $\delta_1,\delta_2$ and $\z$. As a result, let  $\widetilde I^\delta_{s}(\z)$ be the noisy version of $\widetilde I_s(\z)$,
\begin{align*}
  \left|\widetilde I_s(\z)-\widetilde {I}^\delta_{s}(\z)\right|\leq C_2\max{(\delta_1,\delta_2)},\quad \text{ as } \max{(\delta_1,\delta_2)}\rightarrow 0,
  \label{stabesti}
\end{align*}
for some constant $C_2>0$ independent of $\delta_1,\delta_2$ and $\z$. We also obtain similar stability estimates  for the imaging functions $\widetilde {I}^{\text{re},\delta}_{s}(\z)$ and $\widetilde {I}^{\text{im},\delta}_{s}(\z)$. 
\end{theorem}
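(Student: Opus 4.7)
The plan is to reduce both estimates to a single linear bound on $|I(\z,\q)-I^\delta(\z,\q)|$ by Cauchy--Schwarz on $\partial\Omega$, and then promote to the $s$-th power using a uniform bound on $|I(\z,\q)|$. The first step uses the definition \eqref{eq:im} directly: the map from Cauchy data to $I$ is linear, so
\begin{align*}
I(\z,\q)-I^\delta(\z,\q) = \int_{\partial\Omega} \curl(\Im\G(\x,\z)\q)\times\bm{\nu}\cdot(\E-\E_\delta)\,\d s(\x) - \int_{\partial\Omega} (\curl\E-\curl\E_\delta)\times\bm{\nu}\cdot\Im\G(\x,\z)\q\,\d s(\x).
\end{align*}
Applying Cauchy--Schwarz on $\partial\Omega$ and the noise assumptions gives
$$|I(\z,\q)-I^\delta(\z,\q)| \le A_1(\z)\,\delta_1\|\E\|_{L^2(\partial\Omega)} + A_2(\z)\,\delta_2\|\curl\E\times\bm{\nu}\|_{L^2(\partial\Omega)},$$
with $A_1(\z):=\|\curl(\Im\G(\cdot,\z)\q)\times\bm{\nu}\|_{L^2(\partial\Omega)}$ and $A_2(\z):=\|\Im\G(\cdot,\z)\q\|_{L^2(\partial\Omega)}$.

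The next step, and where I expect the main technical work, is to show that $A_1(\z)$ and $A_2(\z)$ are bounded uniformly in $\z\in\R^3$. From the closed form in Lemma~\ref{lem:imgq} together with the Taylor expansion \eqref{taylor}, one checks that $|\Im\G(\x,\z)\q|$ is bounded by a constant multiple of $|\q|$ uniformly on $\R^3\times\R^3$ (the apparent singularity at $\x=\z$ is removable, and for large $|\x-\z|$ each of the two pieces in \eqref{imgqq} decays). A differentiation in $\x$ yields an analogous uniform bound for $|\curl_\x(\Im\G(\x,\z)\q)|$. Since $\partial\Omega$ has finite surface measure, this provides a constant $M$ with $A_1(\z)+A_2(\z)\le M$ for every $\z\in\R^3$. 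Combined with the previous display, this gives
$$|I(\z,\q)-I^\delta(\z,\q)|\le C_0\max(\delta_1,\delta_2),$$
with $C_0$ independent of $\z$.

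To upgrade to the $s$-th power, I will use the elementary inequality $|a^s-b^s|\le s\,\max(a,b)^{s-1}|a-b|$ for $a,b\ge 0$ together with a uniform upper bound on $|I(\z,\q)|$ itself. Such a bound follows from Lemma~\ref{theo1}, since $I(\z,\q)=\sum_{j=1}^N \p_j\cdot\Im\G(\x_j,\z)\q$ and the uniform boundedness of $\Im\G$ established above forces $|I(\z,\q)|\le B$ for some $B$ independent of $\z$. Consequently $|I^\delta(\z,\q)|\le B+C_0\max(\delta_1,\delta_2)$ is also bounded for sufficiently small noise, and the $s$-th power estimate holds with $C_1=s(B+1)^{s-1}C_0$ (say) once $\max(\delta_1,\delta_2)\le 1$. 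Summing the estimate over $\q=\mathbf{e}_1,\mathbf{e}_2,\mathbf{e}_3$ and using the triangle inequality yields the bound for $\widetilde I_s$ with $C_2=3C_1$.

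Finally, for the complex-moment versions $\widetilde I^{\mathrm{re}}_s$ and $\widetilde I^{\mathrm{im}}_s$, observe that $|\Re I(\z,\mathbf{e}_i)-\Re I^\delta(\z,\mathbf{e}_i)|\le |I(\z,\mathbf{e}_i)-I^\delta(\z,\mathbf{e}_i)|$ and likewise for the imaginary part, so the same chain of inequalities — linear bound, power-$s$ bound using uniform boundedness, triangle inequality over the three coordinates — delivers the analogous stability estimates. The only substantive step is the uniform-in-$\z$ control of $\Im\G$ and its curl, which is a direct consequence of the explicit formula in Lemma~\ref{lem:imgq}; every other step is a linear or elementary inequality.
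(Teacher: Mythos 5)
Your proposal is correct, and it supplies in full the argument that the paper itself omits (the paper simply defers to \cite{Nguyen2023}); the chain you use --- linearity of the data-to-$I$ map, Cauchy--Schwarz on $\partial\Omega$, uniform-in-$\z$ bounds on $\Im\G(\cdot,\z)\q$ and its curl from the explicit formula of Lemma \ref{lem:imgq} (bounded near the removable singularity, decaying at infinity), the mean-value inequality $|a^s-b^s|\le s\max(a,b)^{s-1}|a-b|$ together with the uniform bound $|I(\z,\q)|\le B$ from Lemma \ref{theo1}, and finally the triangle inequality over $\q=\mathbf{e}_1,\mathbf{e}_2,\mathbf{e}_3$ and over real/imaginary parts --- is the standard route for such stability estimates and is exactly what the cited reference carries out for the scalar case. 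The only point worth making explicit is that $C_0$ absorbs the fixed quantities $\|\E\|_{L^2(\partial\Omega)}$ and $\|\curl\E\times\bm{\nu}\|_{L^2(\partial\Omega)}$, which is consistent with the claimed independence of $C_1$ from $\delta_1,\delta_2$ and $\z$.
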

\begin{proof}
   The proof can be done similarly as in \cite{Nguyen2023}, thus we omit it here.
\end{proof}
\vspace{0.1cm}

To conclude this section, it is worth noting that the imaging functions developed from   $I(\z,\q)$ defined in \eqref{eq:im} require no specific restrictions on the sampling points $\z$. This enables imaging at an arbitrary distance from the measurement boundary, including areas close to it. When the sampling points are distinct from those on the measurement boundary, we discuss the following base function $ \widehat{I}(\z,\q)$, which can be employed similarly as $I(\z,\q)$ for source reconstruction.
\begin{remark}  Assume that $\z$ is distinct from points on the boundary measurement $\x\in \partial \Omega$. 
We introduce the function 
\begin{equation}
\label{eq:Ihat}
    \widehat{I}(\z,\q) := \frac{i}{2} \int_{\partial \Omega} \curl (\overline{\G}(\x,\z)\q)\times \bm{\nu}\cdot \E(\x)-\curl \E(\x) \times \bm{\nu}\cdot\overline{\G}(\x,\z)\q \text{ }\d s(\x).
\end{equation}
    Similar to Lemma \ref{theo1}, we also can prove that 
    \begin{align}
    \widehat{I}(\z,\q) =\sum_{j=1}^N \p_j\cdot \Im\G(\x_j,\z)\q,\label{im2_id}
\end{align} using the following identity of the Green's tensor 
    \begin{equation*}
    \int_{\partial \Omega} \curl (\overline{\G}(\x,\z)\q)\times \bm{\nu} \cdot \G(\x,\y)\p-\curl (\G(\x,\y)\p)\times \bm{\nu}\cdot \overline{\G}(\x,\z)\q \text{ }\d s(\x) = -2i\p \cdot \Im\G(\y,\z)\q,
    \label{eq:Iz_g}
\end{equation*}    
for any $\y\in\Omega$, $\z \in \R^3$, and $\z \notin \partial \Omega $  to avoid singularity.
By combining \eqref{im2_id} with Theorem \ref{theo1}, it follows that $\widehat{I}(\z,\q)$ is expected to behave similarly to ${I}(\z,\q)$. We then denote another imaging function similar to \eqref{tildeI} using the base $\widehat{I}(\z,\q)$ by 
\begin{align}
\widetilde{\widehat{I}_s}(\z) = |\widehat{I}(\z,{\bf{e}}_1)|^s +  |\widehat{I}(\z,{\bf{e}}_2)|^s +  |\widehat{I}(\z,{\bf{e}}_3)|^s.
\label{tildehatI}
\end{align}
Similarly, we can also extend the imaging functions in \eqref{Icomplex1}--\eqref{Icomplex}, and the numerical algorithm for $\widehat{I}(\z,\q)$.

\end{remark}
\section{\textbf{Identification of  electromagnetic sources  with small volumes }}
\label{se:smallballs}
In this section, we extend our method to determine small-volume electromagnetic sources. We consider sources having compact support within $N\in \N$ small-volume domains $D_j$, expressed as $$\F=\sum_{j=1}^N \p_j{{1}}_{D_j},$$
where ${1}_{D_j}$ is the indicator function on $D_j$ and 
$D_j = \x_j + \epsilon B_j \subset \Omega$.
We assume that $D_j \cap D_i = \emptyset$ for $ i \neq j$ and that the parameter $\epsilon>0$ is small.  The bounded domains $B_j$ have Lipschitz boundaries and contain the origin, with  $|B_j| = \mathcal{O}(1)$. Denote $D=\cup_{j=1}^ND_j$, then $|D|=\mathcal{O}(\epsilon^3)$. Each $\x_j\in \R^3$ is located in  $D_j$, and is assumed to be within the measurement domain $\Omega$ with  $\text{dist}_{i\ne j}(\x_i , \x_j) \gg \lambda$. Additionally, each $D_j$ is characterized by a nonzero vector $\p_j\in\mathbb{C}^3$. 


Now, the electric field $\E$ that is generated by the source $\F$ satisfy  
\begin{equation}
\label{small-sourceprob}
\curl \curl \E (\x)- k^2 \E (\x)= \F , \quad  \text{ in } \mathbb{R}^3.
\end{equation}
As in the previous section, we assume that the electric field $\E$ is radiating, meaning that it satisfies the 
Silver-Müller radiation condition \eqref{eq:silver}. Again, the radiation condition holds uniformly with respect to $\x/|\x|$. It is well-known that the radiated field $\E$ is given by 
\begin{equation}\label{efield-smallsource}
\E (\x) =  \int_{D}  \G(\x,\y) \F(\y) \d \y,
\end{equation} 
where again $\G(\x,\y)$ is the  Green's tensor in \eqref{greentensor}.

Our goal is to solve the inverse problem of determining the number of unknown small-volume sources $N$, points $\x_j$ within $ D_j\subset \Omega$, and the directions of vectors $\p_j$ for $j=1,2,\hdots, N$. This is based on the Cauchy data measurements of
$\E(\x)$ and $\text{curl} \, \E(\x) \times \bm{\nu}$  for all $\x \in {\partial \Omega}$,
which take the forms
$$\E (\x) \big|_{{\partial \Omega}}  =  \int_{D}  \G(\x,\y) \F(\y) \,\text{d}\y \big|_{{\partial \Omega}}  \quad \text{ and } \quad \text{curl} \, \E (\x) \big|_{{\partial \Omega}}  = \text{curl} \int_{D}  \G(\x,\y) \F(\y) \,\text{d}\y\big|_{{\partial \Omega}} .$$
 Recall that ${\partial \Omega}$ denotes the closed known measurement surface, with $\bm{\nu}$ as the unit outward normal vector. To proceed, we assume $\text{dist}({\partial \Omega}, D)>0$, which implies that the Green's tensor is a smooth function in the variable $\y \in D$. Given this, we can perform a component-wise Taylor expansion in $D_j$ such that 
\begin{equation}\label{tensor-taylor}
\G(\x,\y) =  \G(\x,\x_j) + \mathcal{O}(\epsilon).
\end{equation} 
This is because $\y = \x_j + \epsilon \mathbf w$ for some $\mathbf w \in B_j$. By incorporating \eqref{tensor-taylor} into the integral formulas of $\E(\x)$ and $\curl \E(\x)$ mentioned above, we obtain their representation as follows
\begin{equation} 
\E (\x) = \sum\limits_{j=1}^{N} \epsilon^3 |B_j| \G(\x,\x_j) \text{Avg}(\F_j) +\mathcal{O}(\epsilon^4) \label{field-asym}
\end{equation}
and 
\begin{equation} 
 \text{curl} \, \E (\x) = \sum\limits_{j=1}^{N} \epsilon^3 |B_j| \text{curl} \big(\G(\x,\x_j) \text{Avg}(\F_j) \big)+\mathcal{O}(\epsilon^4) ,\label{curl-asym}
\end{equation}
for all $\x \in {\partial \Omega}$. Here, we let $\text{Avg}(\F_j)$ denote the average value of $\F$ on the subregion $D_j$. Moreover, 
$$\text{Avg}(\F_j) = \frac{1}{|D_j|} \int_{D_j} \F(\y)d\y 
=\p_j  \quad  \text{for all} \quad j = 1,2, \hdots , N.$$ 
Notice that in  \eqref{field-asym}--\eqref{curl-asym}, we use the fact that $|D_j| = \epsilon^3 |B_j|$. This means that the electromagnetic data for the case of a source with small-volume support is up to the leading order and has the same structure as the electromagnetic data given by finite point sources. 
\vspace{0.3cm}

Now, we recall the proposed base function $I(\z,\q)$
\begin{align*}
    I(\z,\q) =\int_{\partial \Omega} \curl (\Im{\G}(\x,\z)\q)\times \bm{\nu} \cdot \E(\x)-\curl \E(\x) \times \bm{\nu}\cdot\Im{\G}(\x,\z)\q\,\text{d} s(\x),
\end{align*}
for any sampling point $\z \in \R^3$ and a fixed nonzero vector $\q \in \R^3$. By using to the asymptotic expressions \eqref{field-asym}--\eqref{curl-asym} and Lemma \ref{theo1}, we rewrite the function as 
\begin{align*}
I(\z,\q) &= \sum\limits_{j=1}^{N} \epsilon^3  |B_j| \int_{{\partial \Omega}}  \text{curl} \, \big( \Im{\G}(\x,\z)\q\big)\times \bm{\nu} \cdot \G(\x,\x_j) \p_j \\
&\hspace{1.2in} -   \text{curl} \, \left(\G(\x,\x_j)\p_j \right)\times \bm{\nu}  \cdot \Im{\G}(\x,\z)\q\,\text{d}s(\x) + \mathcal{O}(\epsilon^4)\\
&= \sum_{j=1}^N\epsilon^3  |B_j|  \p_j\cdot \Im\G(\x_j,\z)\q + \mathcal{O}(\epsilon^4).
\end{align*}
Based on this, up to the leading order, $I(\z,\q)$ is expected to behave similarly to the case of point sources discussed in Section \ref{se:pointsource}, with each point source at $\x_j$ having the moment vector $\epsilon^3 |B_j|\p_j$ for $j=1,2,\hdots, N$. 
Therefore, we can use the two new imaging functions $\widetilde {I}^\text{re}_s(\z)$ and $ \widetilde {I}^\text{im}_s(\z)$, along with their updated versions following the proposed algorithm in the previous section, to determine points $\x_j$ within the small-volume subregions. Recall that
\begin{align*}
&\widetilde {I}^\text{re}_s(\z) = |\Re I(\z,{\bf{e}}_1)|^s +  |\Re I(\z,{\bf{e}}_2) |^s +  |\Re I(\z,{\bf{e}}_3)|^s, \\
&
 \widetilde {I}^\text{im}_s(\z) = |\Im I(\z,{\bf{e}}_1) |^s +  |\Im I(\z,{\bf{e}}_2) |^s +  |\Im(I(\z,{\bf{e}}_3) |^s.
 \label{Icomplex}
\end{align*}
 Since, again, these imaging functions peak as sampling point $\z$ approaches $\x_j$ for some integer $s>0$ and decay quickly as the sampling point $\z$ moves away from the small-volume sources. Specifically, for $\z \in \R^3$,
$$\widetilde {I}^\text{re}_s(\z)=\widetilde {I}^\text{im}_s(\z) = \mathcal{O} \Big ( \text{dist}(\z,{\bf X})^{-s} \Big ), \quad \text{as} \quad \text{dist}(\z,{\bf X}) \rightarrow \infty,$$ 
 where the set ${\bf X} = \{\x_j : j=1, 2, \ldots, N\}$. Furthermore, following the numerical algorithm, 
 we can compute and normalize the terms $\epsilon^3  |B_j| \p_j$ to estimate the directions of vectors $\mathbf{p}_j$.
\section{Numerical study}
\label{se:results}
 {In this section, we present numerical examples to determine three-dimensional electromagnetic sources using our proposed method. The simulation was done using the computing software MATLAB. 
Throughout our examples, the sampling domain is the cube $[-1.5, 1.5]^3$, which contains the unknown sources and is uniformly discretized into $200$ sampling points in each direction, resulting in a step size of approximately $0.015$. In most of our examples, we consider  $k = 20$ and   $s=4$ in the imaging functions. Sections \ref{part:k} and \ref{part:s} include the numerical examples for different values of $k$ and $s$, respectively. 
}The synthetic Cauchy data $\E(\x)$ and $\curl \E(\x)\times \bm{\nu}$ 
 are generated by numerical computation  of $\E$ in \eqref{reprep} and  $$\curl \E(\x)  =\sum_{j=1}^N \curl (\Phi(\x,\y)\p_j).$$ 
In most examples, the Cauchy data are measured on a sphere of radius $25$, which is approximately 
$80$ wavelengths, centered at the origin. The case of near field data is presented in section~\ref{nearfield}. These data points are expressed in spherical coordinates using angles $\phi \in [0,\pi]$ and $ \theta \in [0,2\pi]$, each having $100$ uniformly distributed values across their respective ranges. 

To simulate noisy data, we introduce random noise to our synthetic data. More precisely, we incorporate two complex-valued noise vectors $\mathcal{N}_{1,2} \in \mathbb{C}^3$ into the data vectors, each containing numbers $a+ib$ where $a, b\in (-1,1)$ randomly generated from a uniform distribution.  {For simplicity, we consider the same noise level $\delta=10\%$ for $\E$ and $\curl \E\times \bm{\nu}$ in most numerical examples. The performance of the method for highly noisy data is presented in section \ref{part: noise}.} The noisy data are given by 
\begin{align*}
    \E_\delta : =  \E+\delta \frac{\mathcal{N}_1}{\|\mathcal{N}_1\|_2}\|\E\|_2,\quad \curl \E_\delta \times \bm{\nu} : =  \curl \E \times \bm{\nu} +\delta \frac{\mathcal{N}_2}{\|\mathcal{N}_2\|_2}\|\curl \E \times \bm{\nu}\|_2,
\end{align*}
where $\|\cdot \|_2$ is the Euclidean norm. The isovalue used in the 3D isosurface plotting for all examples is
$0.2$ ($20\%$ of the maximal value of the imaging functions that are normalized). All estimated locations and moment vectors are rounded to three decimal digits.


\subsection{Reconstruction results for point sources}
\label{part:point}
We begin with an example involving three   point sources that have moment vectors with comparable magnitudes. The proposed algorithm, which utilizes the imaging functions $\widetilde{I}^\text{re}_s(\mathbf{z})$ and $\widetilde{I}^\text{im}_s(\mathbf{z})$, accurately and quickly determine the number of sources and source locations.
Despite the presence of $10\%$ random noise in the data, the computed moment vectors achieve a relative error of less than $10\%$, demonstrating the robustness of the method. The reconstruction results are presented in Figure \ref{3points} and  Table \ref{Ta:3points}. 
\begin{center}
{\scalebox{0.9}{
    \begin{tabular}{|c|c|c|c|}
    \hline
 \rowcolor{lightgray}    \text{True location} $(N=3)$&  \text{Computed location} & \text{True moment vector} & \text{Computed moment vector} \\
        \hline
     $(-0.9,0,1)$ &  $(-0.900, 0.000,1.005)$ & $\begin{pmatrix}
         -2.5\\4\\-3
     \end{pmatrix}$  &  $\begin{pmatrix}
       -2.471   \\ 3.775  \\ -3.046 
     \end{pmatrix}$  
        \\
     \hline 
     $(-1,0.75,-1)$&$(-1.005, 0.750,-1.005)$& $\begin{pmatrix}
         -1+3i\\5+4i\\3
     \end{pmatrix}$  
     & $\begin{pmatrix}
           -0.924 + 3.000i \\  4.901 + 3.988i\\   2.968 + 0.001i
     \end{pmatrix}$
         \\
         \hline
        $(1.1,-0.3,-1)$&$(1.095, -0.300,-1.005)$ &$\begin{pmatrix}
            4.5i\\-5\\3-2i
        \end{pmatrix}$
    &$ \begin{pmatrix}
        -0.009 + 4.504i\\  -4.922 + 0.031i \\  2.987 - 1.992i
    \end{pmatrix}$
    \\
    \hline
     
    \end{tabular}}
    \captionof{table}{Reconstruction results for three point sources where the moment vectors have comparable magnitudes  {for $k=20, s=4$}. \label{Ta:3points}}}
    \end{center}  
    \begin{figure}[h]
    \centering
     \subfloat[True location]{
    \begin{tikzpicture}
    \node[anchor=south west,inner sep=0] (image) at (0,0) {\includegraphics[width=5cm]{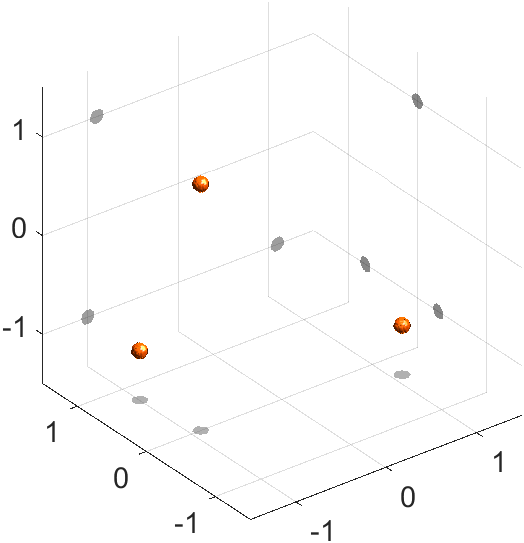}};
    \begin{scope}[x={(image.south east)},y={(image.north west)}]
      \node[anchor=north] at (.9,0.11) {{\footnotesize{$x$}}};
      \node[anchor=south] at (0.12,.06) {\footnotesize{$y$}};
      \node[anchor=south] at (0.05,0.83) {\footnotesize{$z$}};      
    \end{scope}
  \end{tikzpicture}}  
     \hspace{0.55cm}    
     \subfloat[Real parts reconstruction]{
    \begin{tikzpicture}
    \node[anchor=south west,inner sep=0] (image) at (0,0) {\includegraphics[width=5.cm]{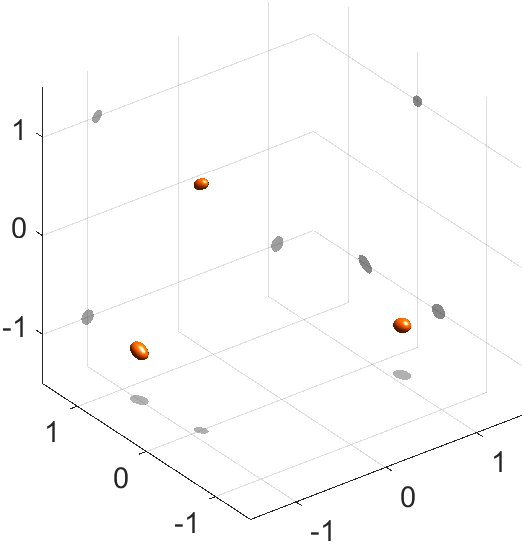}};
    \begin{scope}[x={(image.south east)},y={(image.north west)}]
      \node[anchor=north] at (.9,0.11) {\phantom{\footnotesize{$x$}}};
      \node[anchor=south] at (0.12,.06) {\phantom{\footnotesize{$y$}}};
      \node[anchor=south] at (0.05,0.83) {\phantom{\footnotesize{$x$}}};      
    \end{scope}
  \end{tikzpicture}}  
  \hspace{0.55cm}
     \subfloat[Imaginary parts reconstruction]{
    \begin{tikzpicture}
    \node[anchor=south west,inner sep=0] (image) at (0,0) {\includegraphics[width=5cm]{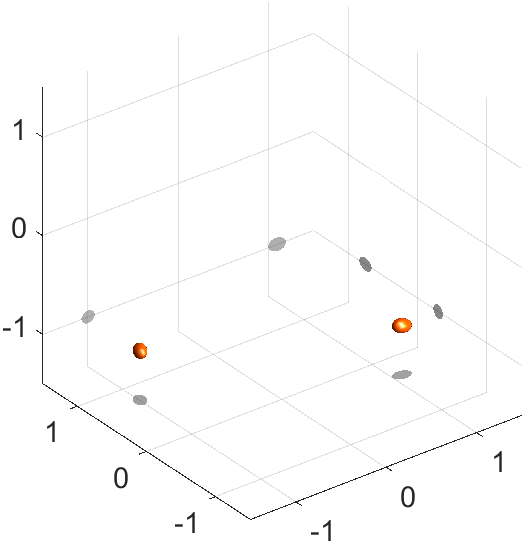}};
    \begin{scope}[x={(image.south east)},y={(image.north west)}]
      \node[anchor=north] at (.9,0.11) {\phantom{\footnotesize{$x$}}};
      \node[anchor=south] at (0.12,.06) {\phantom{\footnotesize{$x$}}};
      \node[anchor=south] at (0.05,0.83) {\phantom{\footnotesize{$x$}}};      
    \end{scope}
  \end{tikzpicture}}  
     \\
    \subfloat[ $\widetilde {I}^\text{re}_4(\z)  \text{ on } \{z = 1\} $]{
  \begin{tikzpicture}
    \node[anchor=south west,inner sep=0] (image) at (0,0) {\includegraphics[width=5.2cm]{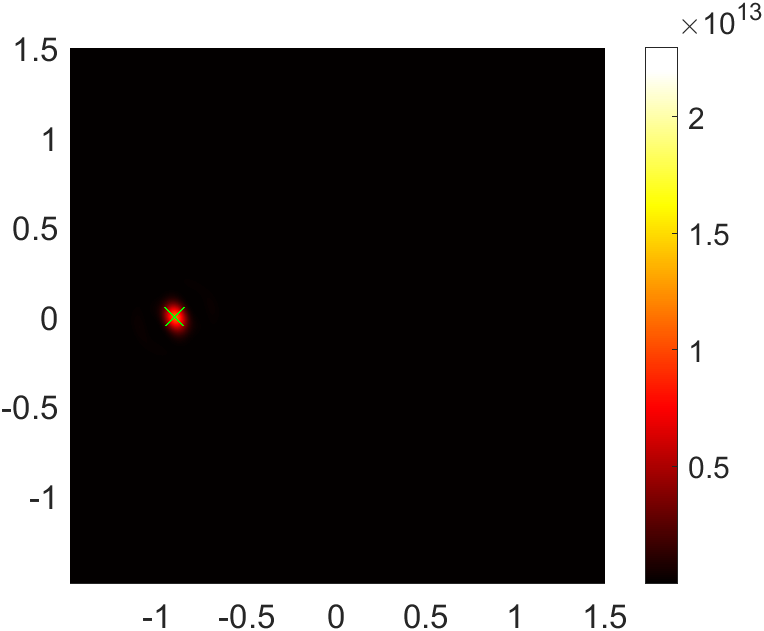}};
    \begin{scope}[x={(image.south east)},y={(image.north west)}]
      \node[anchor=north] at (.53,0) {\footnotesize{$x$}};
      \hspace{0.2cm}
      \node[anchor=south,rotate=90] at (0,.52) {\footnotesize{$y$}};
    \end{scope}
  \end{tikzpicture}}
  \hspace{0.3cm}
  \subfloat[ $\widetilde {I}^\text{re}_4(\z)  \text{ on } \{z = -1\} $]{
  \begin{tikzpicture}
    \node[anchor=south west,inner sep=0] (image) at (0,0) {\includegraphics[width=5.3cm]{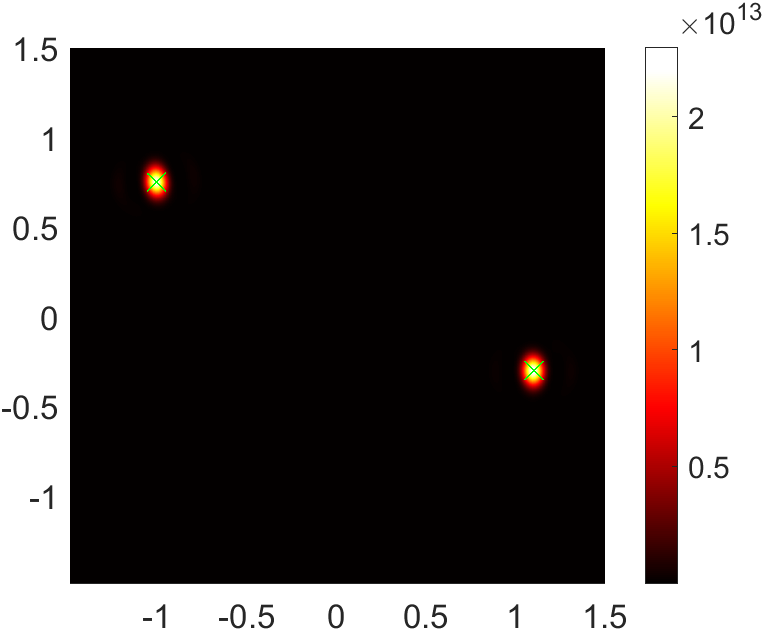}};
    \begin{scope}[x={(image.south east)},y={(image.north west)}]
      \node[anchor=north] at (.5,0) {\phantom{\footnotesize{$x$}}};
    \end{scope}
  \end{tikzpicture}}
   \hspace{0.3cm}
  \subfloat[$\widetilde {I}^\text{im}_4(\z) \text{ on } \{z = -1\} $]{
  \begin{tikzpicture}
    \node[anchor=south west,inner sep=0] (image) at (0,0) {\includegraphics[width=5.2cm]{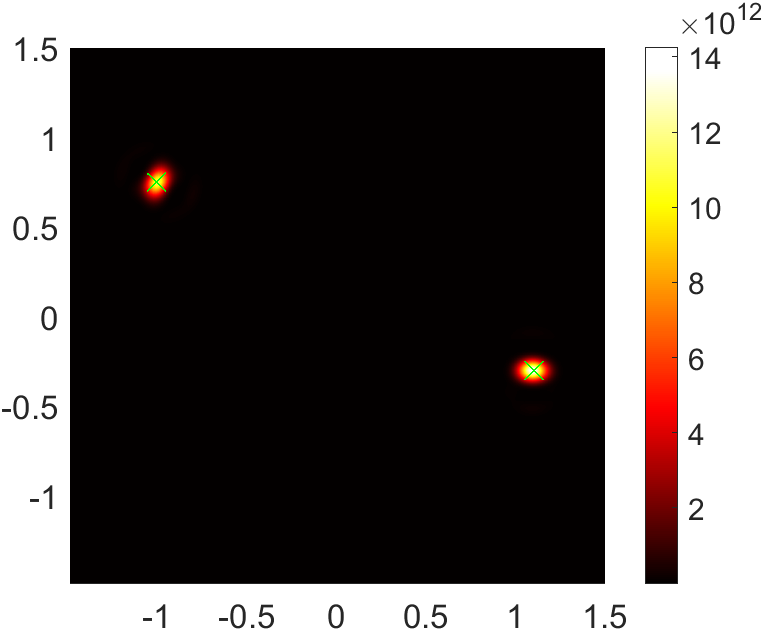}};
    \begin{scope}[x={(image.south east)},y={(image.north west)}]
      \node[anchor=north] at (.5,0) {\phantom{\footnotesize{$x$}}};
    \end{scope}
  \end{tikzpicture}}
    \caption{
    Reconstruction results for the three point sources in Table \ref{Ta:3points}  {for $k=20, s=4$}. Isosurface visualizations for the true locations in (a), for $\widetilde{I}^\text{re}_4(\z)$ in (b), and for $\widetilde{I}^\text{im}_4(\z)$ in (c). Cross-sectional views restricted to 2D domains of $\widetilde{I}^\text{re}_4(\z)$ in (d)--(e) and of $\widetilde{I}^\text{im}_4(\z)$ in (f), with the true locations marked with green crosses.
    }
    \label{3points}
\end{figure}

Our next example considers the case of six point sources where some moment vectors can have notably different magnitudes. 
 As shown in Table \ref{Ta:6points} and Figures \ref{Fi:6points_3d}--\ref{Fi:6points_2d_im}, the two  imaging functions  $\widetilde {I}^\text{re}_s(\z)$ and $ \widetilde {I}^\text{im}_s(\z)$, along with their updated versions based on the proposed numerical algorithm, efficiently determine the number of sources and their locations with high accuracy. The computed moment vectors further demonstrate the effectiveness and robustness of the method, achieving a low relative error of less than $10$\% from the noisy Cauchy data.

\begin{center}
{\scalebox{0.9}{
    \begin{tabular}{|c|c|c|c|}
    \hline
 \rowcolor{lightgray}    \text{True location} $(N=6)$&  \text{Computed location} & \text{True moment vector} & \text{Computed moment vector} \\
        \hline
     $(-1.2,0,-1)$ &  $(-1.200, 0.000,-1.005)$ & $\begin{pmatrix}
        80+11i \\  50+16i \\  -32i
     \end{pmatrix}$  &  $\begin{pmatrix}
       79.776 +10.977i\\
    50.359+15.938i\\
          -0.308-31.462i \\
     \end{pmatrix}$  
        \\
     \hline 
     $(0.6,-1,-1)$&$(0.600, -1.005,-1.005)$& $\begin{pmatrix}
          12-23i\\  35 \\ 3+60i
     \end{pmatrix}$  
     & $\begin{pmatrix}
        11.536-22.805i\\
  34.779-0.709i\\
   2.705+60.789i
     \end{pmatrix}$
         \\
         \hline
        $(1,0.5,0)$&$(0.990, 0.495,0.000)$ &$\begin{pmatrix}
             -6\\  7+40i\\ -18+5i
        \end{pmatrix}$
    &$ \begin{pmatrix}
        -5.923-0.187i\\
      6.998+39.645i\\
-18.781+5.360i
    \end{pmatrix}$
    \\
    \hline
        $(-0.3,0,0)$&$(-0.300, 0.000,0.000)$ &$\begin{pmatrix}
             -5i\\  12\\ 9+14i
        \end{pmatrix}$
    &$ \begin{pmatrix}
               -0.049-5.013i\\
  12.209+0.093i\\
     8.416+14.025i

    \end{pmatrix}$\\
      \hline
       $(-1,0.8,1)$&$(-1.005, 0.795,1.005)$ &$\begin{pmatrix}
             7-26i\\  -2\\ 8 
        \end{pmatrix}$
    &$ \begin{pmatrix}          
    7.049-25.951i\\
    -1.977+0.308i\\
    8.015-0.257i 
    \end{pmatrix}$\\
      \hline
       $(0,-1,1)$&$(0.000, -1.005,1.005)$ &$\begin{pmatrix}
             25\\10\\6
        \end{pmatrix}$
    &$ \begin{pmatrix}          
    24.927  \\  9.379  \\  5.160
    \end{pmatrix}$\\
      \hline
    \end{tabular}}
    \captionof{table}{ Reconstruction results for six point sources for which some moment vectors have notably different  magnitudes  {for $k=20, s=4$}.   
    \label{Ta:6points}}}
    \end{center}

\begin{figure}[h]
    \centering
    \subfloat[True location]{
    \begin{tikzpicture}
    \node[anchor=south west,inner sep=0] (image) at (0,0) {\includegraphics[width=5cm]{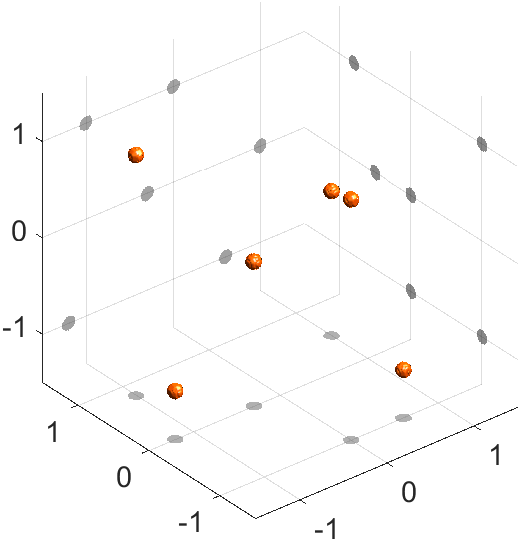}};
    \begin{scope}[x={(image.south east)},y={(image.north west)}]
      \node[anchor=north] at (.9,0.11) {{\footnotesize{$x$}}};
      \node[anchor=south] at (0.12,.06) {\footnotesize{$y$}};
      \node[anchor=south] at (0.05,0.83) {\footnotesize{$z$}};      
    \end{scope}
  \end{tikzpicture}} 
     \hspace{0.55cm}
        \subfloat[Real parts reconstruction]{
    \begin{tikzpicture}
    \node[anchor=south west,inner sep=0] (image) at (0,0) {\includegraphics[width=5cm]{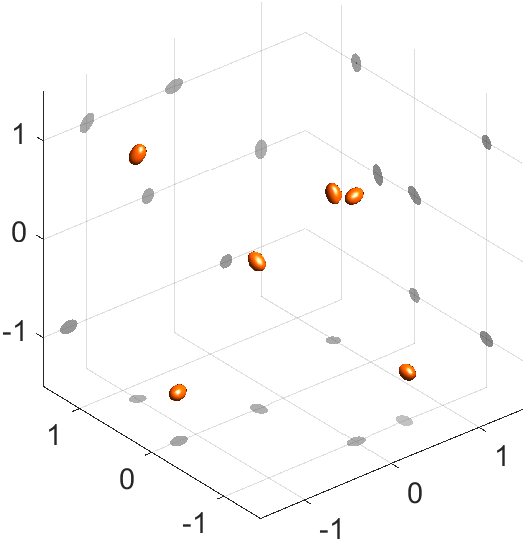}};
    \begin{scope}[x={(image.south east)},y={(image.north west)}]
      \node[anchor=north] at (.9,0.11) {\phantom{\footnotesize{$x$}}};
      \node[anchor=south] at (0.12,.06) {\phantom{\footnotesize{$x$}}};
      \node[anchor=south] at (0.05,0.83) {\phantom{\footnotesize{$x$}}};   \end{scope}
  \end{tikzpicture}} 
      \hspace{0.55cm}
      \subfloat[Imaginary parts reconstruction]{
    \begin{tikzpicture}
    \node[anchor=south west,inner sep=0] (image) at (0,0) {\includegraphics[width=5cm]{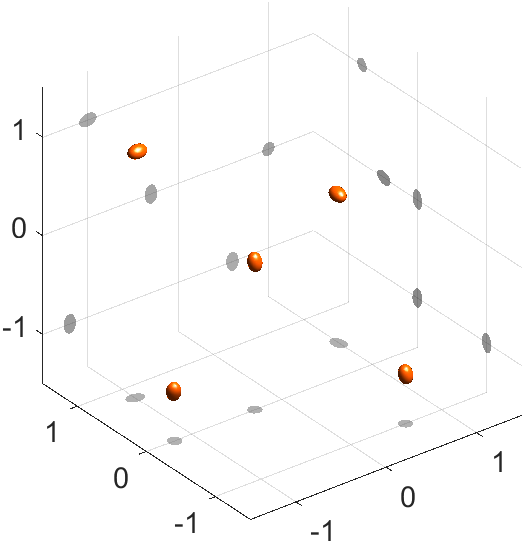}};
    \begin{scope}[x={(image.south east)},y={(image.north west)}]
      \node[anchor=north] at (.9,0.11) {\phantom{\footnotesize{$x$}}};
      \node[anchor=south] at (0.12,.06) {\phantom{\footnotesize{$x$}}};
      \node[anchor=south] at (0.05,0.83) {\phantom{\footnotesize{$x$}}};   \end{scope}
  \end{tikzpicture}} 
    \caption{Reconstruction results for the six point sources in Table \ref{Ta:6points}  {for $k=20, s=4$}. Isosurface visualizations for the true locations in (a), for $\widetilde {I}^\text{re}_4(\z)$ and their updates in (b), and for $\widetilde {I}^\text{im}_4(\z)$ and their updates in (c).}    
    
    \label{Fi:6points_3d}
\end{figure} 

\begin{figure}[h]
 \centering
 \hspace{-0.4cm}
  \begin{tikzpicture}
    \node[anchor=south west,inner sep=0] (image) at (0,0) {\includegraphics[width=5.2cm]{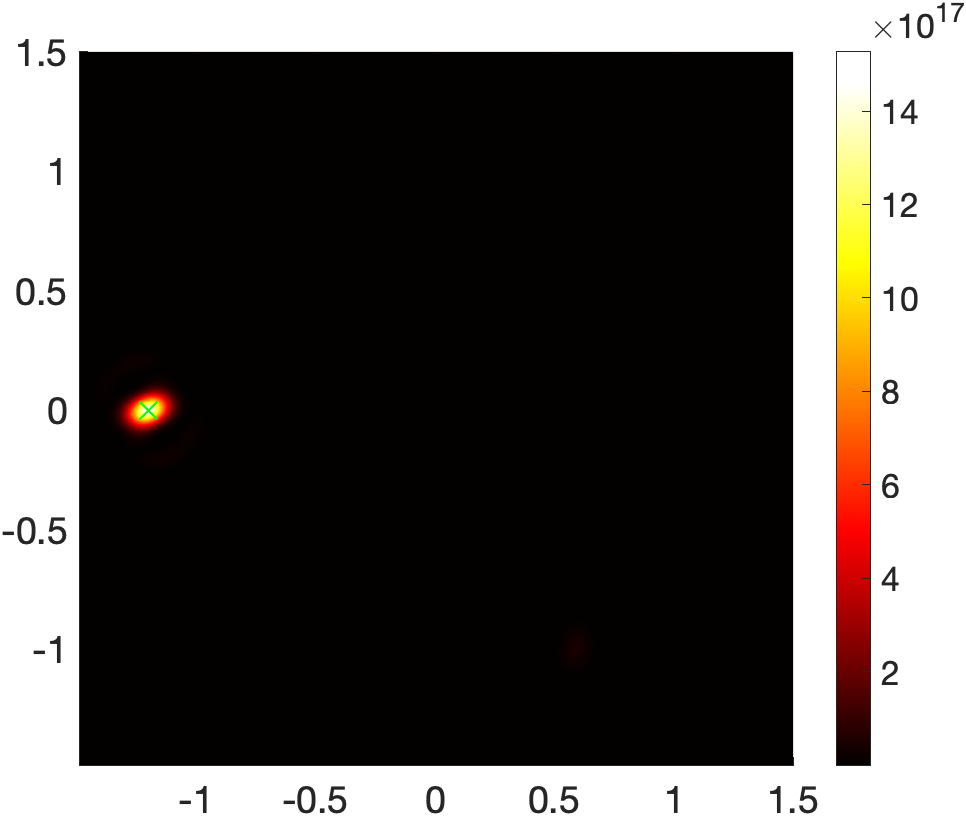}};
    \begin{scope}[x={(image.south east)},y={(image.north west)}]
      \node[anchor=north] at (.53,0) {\footnotesize{$x$}};
      \hspace{0.2cm}
      \node[anchor=south,rotate=90] at (0,.52) {\footnotesize{$y$}};
    \end{scope}
  \end{tikzpicture}   
  \hspace{.3 cm}
    \begin{tikzpicture}
    \node[anchor=south west,inner sep=0] (image) at (0,0) {\includegraphics[width=5.2cm]{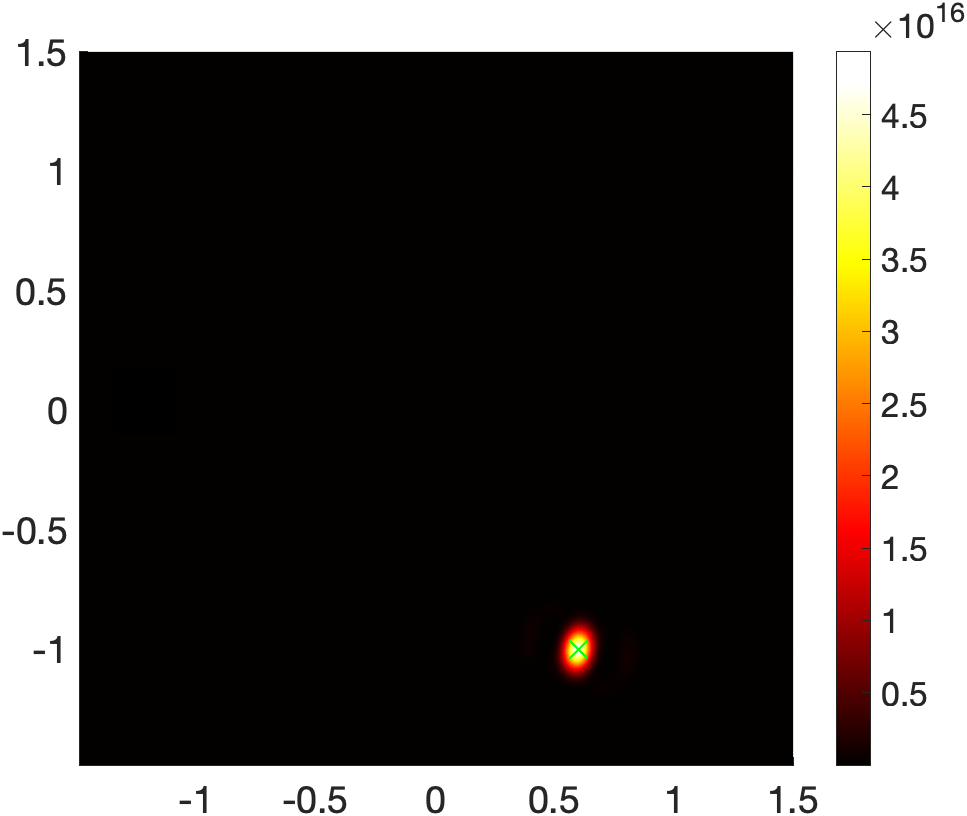}};
    \begin{scope}[x={(image.south east)},y={(image.north west)}]
      \node[anchor=north] at (.53,0) {\phantom{\footnotesize{$x$}}};
    \end{scope}
  \end{tikzpicture}
   \hspace{.3 cm}
    \begin{tikzpicture}
    \node[anchor=south west,inner sep=0] (image) at (0,0) {\includegraphics[width=5.2cm]{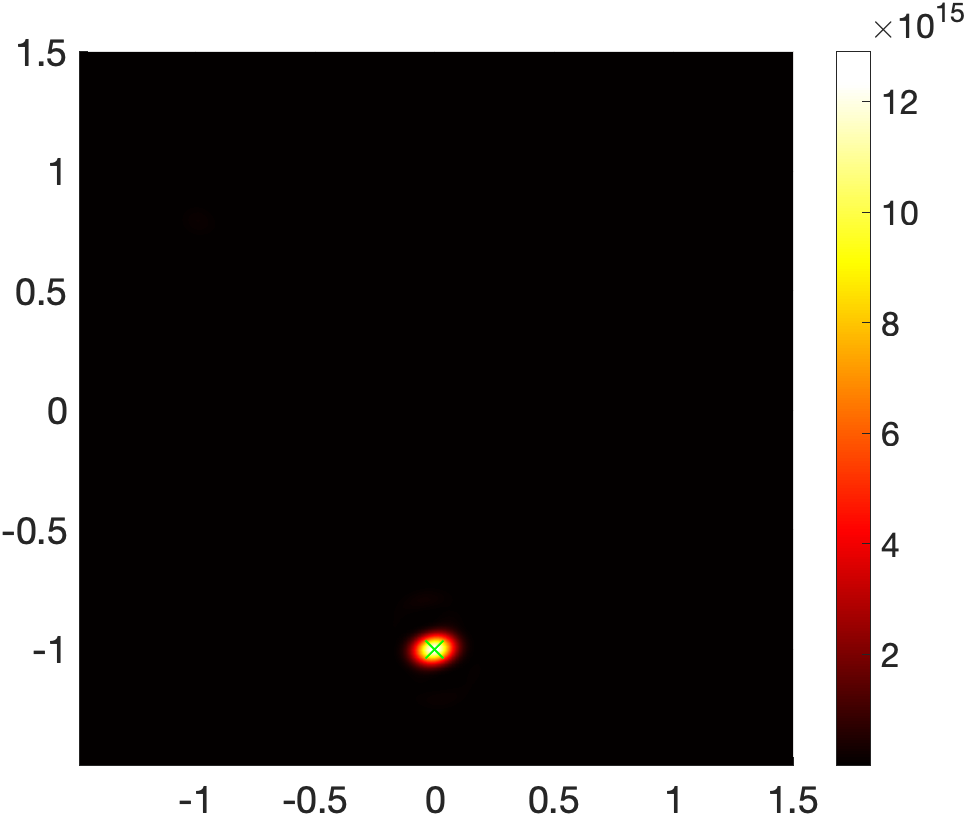}};
    \begin{scope}[x={(image.south east)},y={(image.north west)}]
      \node[anchor=north] at (.53,0) {\phantom{\footnotesize{$x$}}};
    \end{scope}
  \end{tikzpicture}
    \\
    \vspace{0.3cm}
    \hspace{-0.2cm}
  \begin{tikzpicture}
    \node[anchor=south west,inner sep=0] (image) at (0,0) {\includegraphics[width=5.2cm]{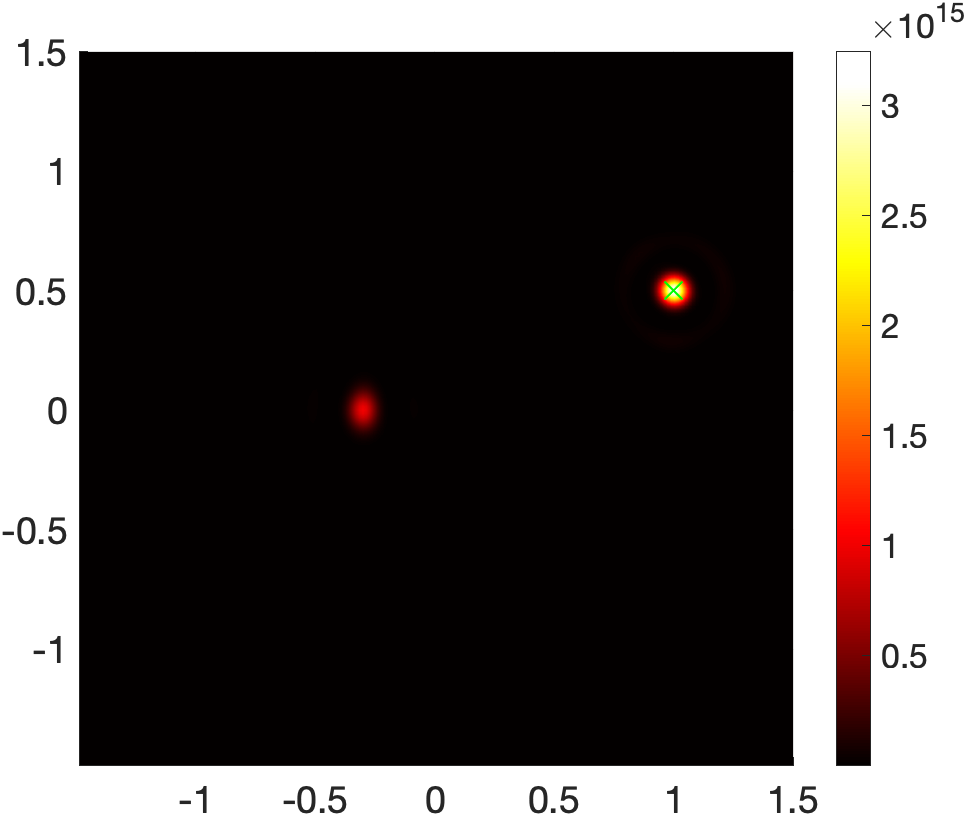}};
    \begin{scope}[x={(image.south east)},y={(image.north west)}]
      \node[anchor=north] at (.53,0) {\phantom{\footnotesize{$x$}}};
      \hspace{0.2cm}
      \node[anchor=north] at (0,0.52) {\phantom{\footnotesize{$x$}}};
    \end{scope}
  \end{tikzpicture}
  \hspace{.3 cm}
  \begin{tikzpicture}
    \node[anchor=south west,inner sep=0] (image) at (0,0) {\includegraphics[width=5.2cm]{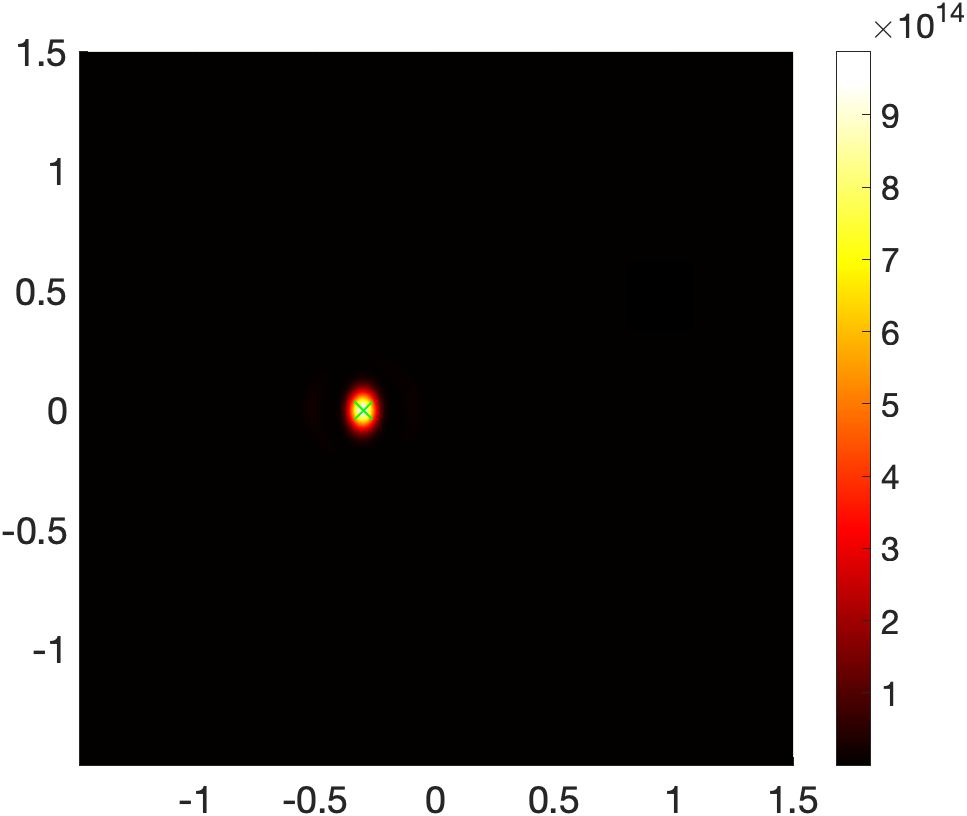}};
    \begin{scope}[x={(image.south east)},y={(image.north west)}]
      \node[anchor=north] at (.53,0) {\phantom{\footnotesize{$x$}}};
    \end{scope}
  \end{tikzpicture}
   \hspace{.3 cm}
  \begin{tikzpicture}
    \node[anchor=south west,inner sep=0] (image) at (0,0) {\includegraphics[width=5.2cm]{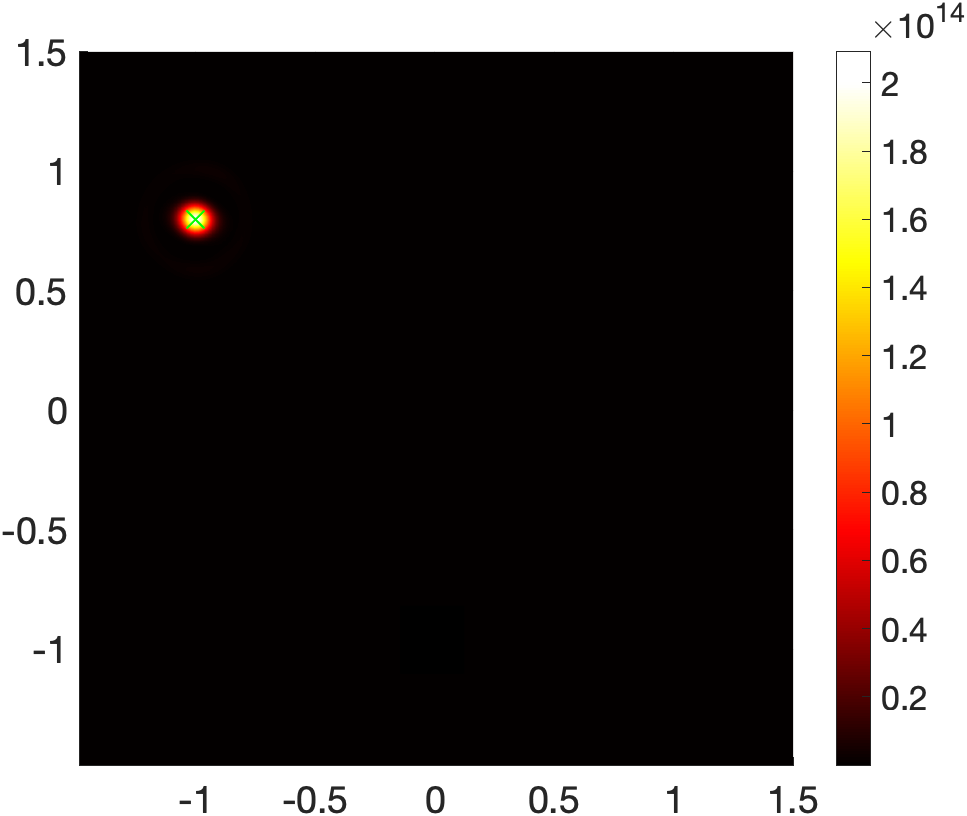}};
    \begin{scope}[x={(image.south east)},y={(image.north west)}]
      \node[anchor=north] at (.53,0) {\phantom{\footnotesize{$x$}}};
    \end{scope}
  \end{tikzpicture}
    \caption{Reconstruction results for the six point sources in Table \ref{Ta:6points}  {for $k=20, s=4$}. Cross-sectional views restricted to 2D domains of $\widetilde{I}^\text{re}_4(\mathbf{z})$ and their updates, with the true locations marked with green crosses.}
    \label{Fi:6points_2d_real}
\end{figure} 
\begin{figure}[ht!]
    \centering
    \hspace{-0.4cm}
  \begin{tikzpicture}
    \node[anchor=south west,inner sep=0] (image) at (0,0) {\includegraphics[width=5.2cm]{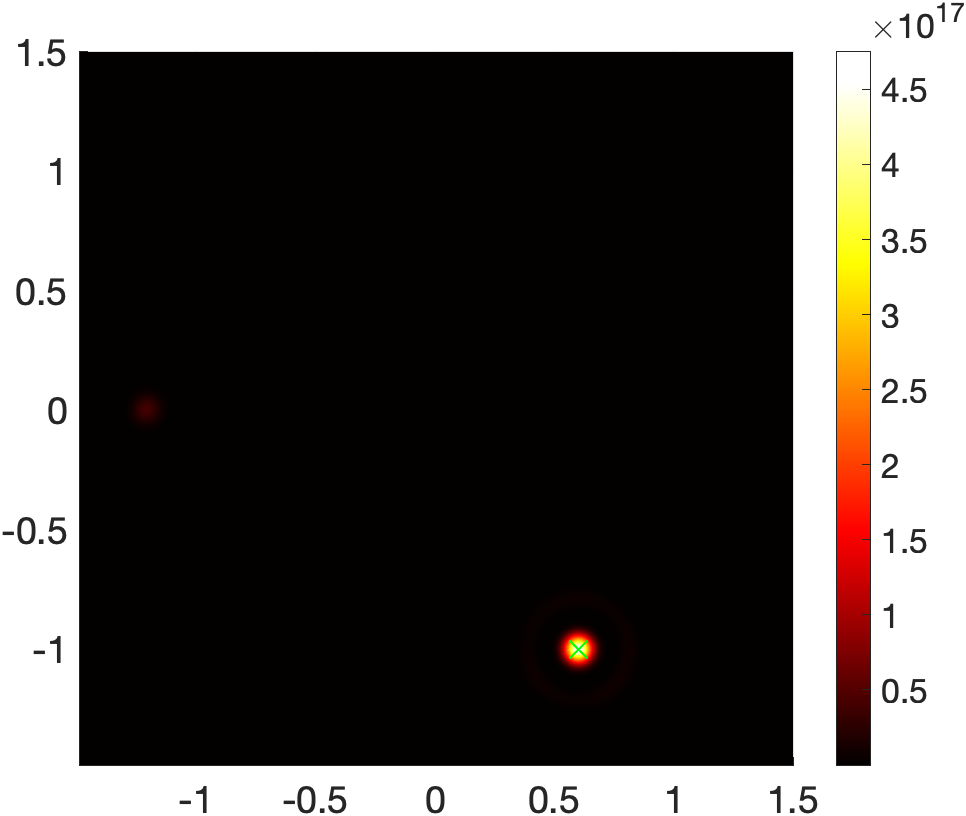}};
    \begin{scope}[x={(image.south east)},y={(image.north west)}]
      \node[anchor=north] at (.53,0) {\footnotesize{$x$}};
      \hspace{0.2cm}
      \node[anchor=south,rotate=90] at (0,.52) {\footnotesize{$y$}};
    \end{scope}
  \end{tikzpicture}   
    \begin{tikzpicture}
    \node[anchor=south west,inner sep=0] (image) at (0,0) {\includegraphics[width=5.2cm]{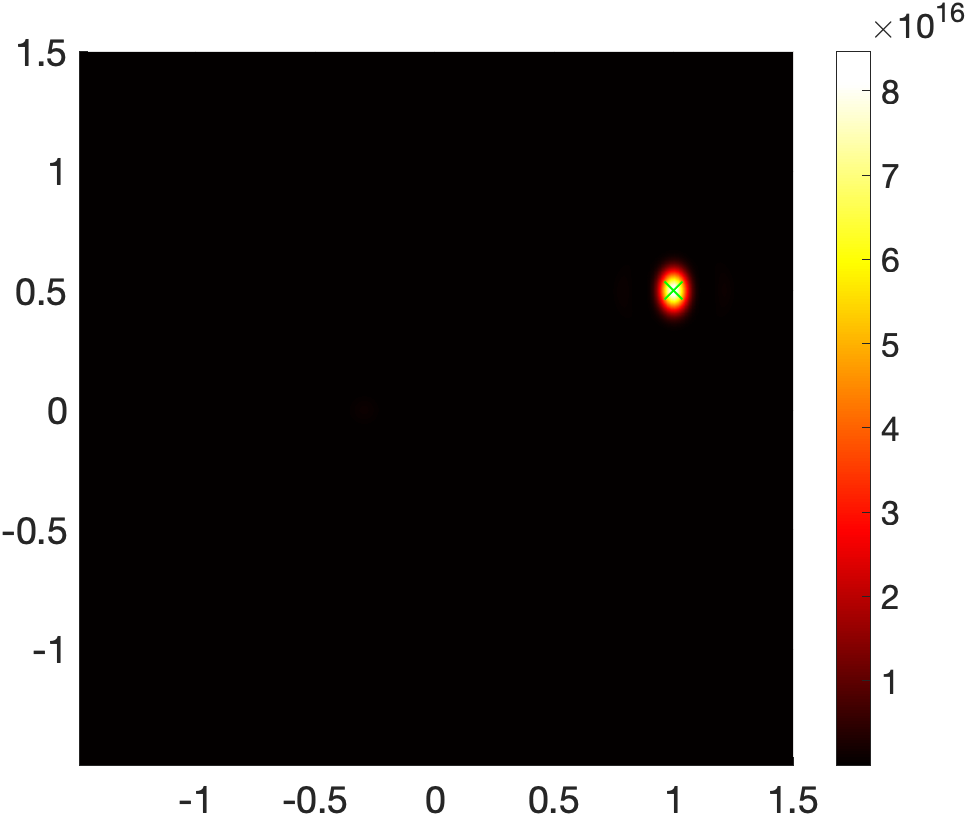}};
    \begin{scope}[x={(image.south east)},y={(image.north west)}]
      \node[anchor=north] at (.53,0) {\phantom{\footnotesize{$x$}}};
      \hspace{0.2cm}
      \node[anchor=south,rotate=90] at (0,.52)  {\phantom{\footnotesize{$x$}}};
    \end{scope}
  \end{tikzpicture}   
   \begin{tikzpicture}
    \node[anchor=south west,inner sep=0] (image) at (0,0) {\includegraphics[width=5.2cm]{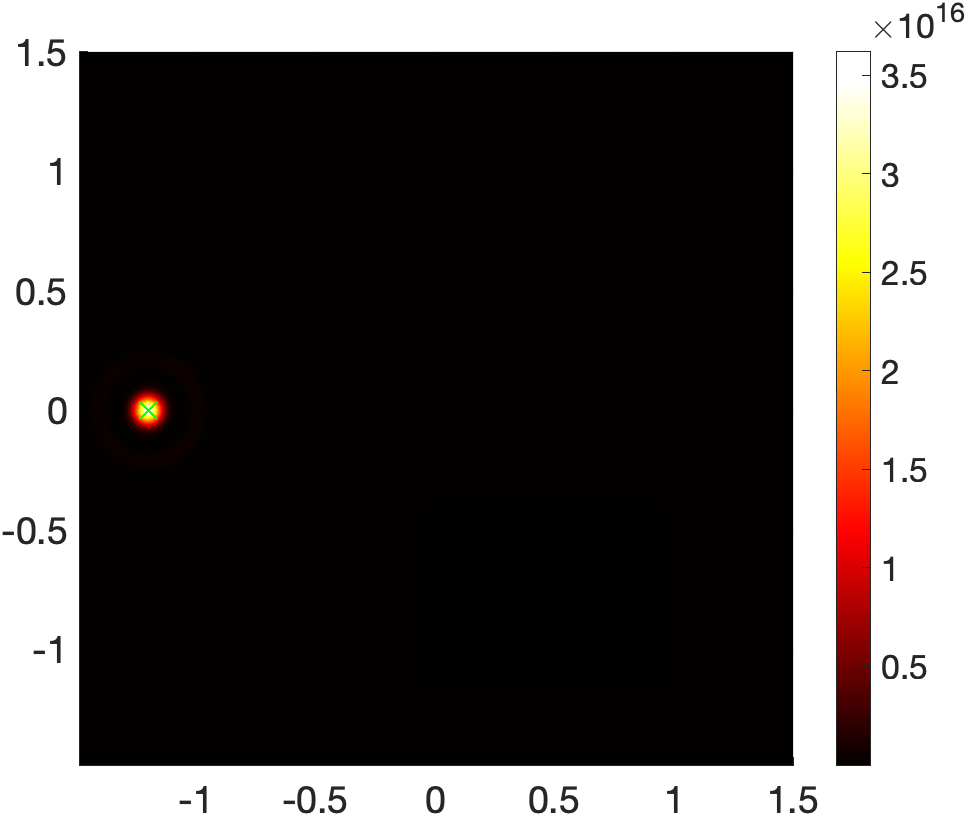}};
    \begin{scope}[x={(image.south east)},y={(image.north west)}]
      \node[anchor=north] at (.53,0) {\phantom{\footnotesize{$x$}}};
      \hspace{0.2cm}
      \node[anchor=south,rotate=90] at (0,.52)  {\phantom{\footnotesize{$x$}}};
    \end{scope}
  \end{tikzpicture}   
  \\ \vspace{0.3cm}
   \begin{tikzpicture}
    \node[anchor=south west,inner sep=0] (image) at (0,0) {\includegraphics[width=5.2cm]{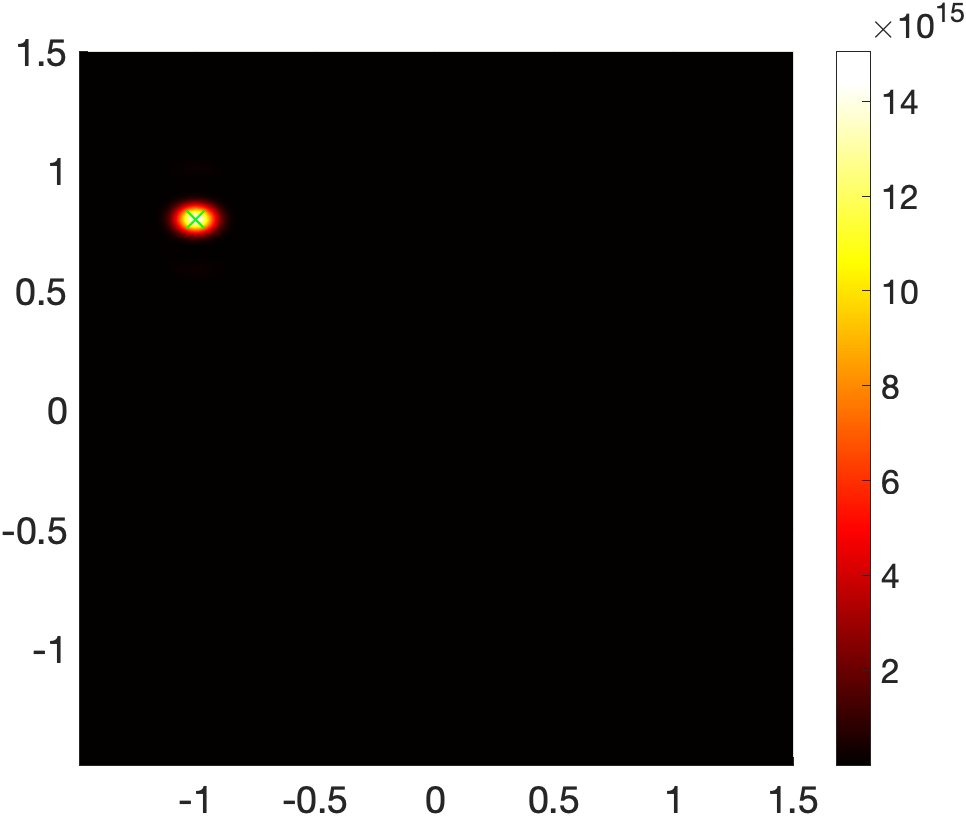}};
    \begin{scope}[x={(image.south east)},y={(image.north west)}]
      \node[anchor=north] at (.53,0) {\phantom{\footnotesize{$x$}}};
      \hspace{0.2cm}
      \node[anchor=south,rotate=90] at (0,.52)  {\phantom{\footnotesize{$x$}}};
    \end{scope}
  \end{tikzpicture}   
   \begin{tikzpicture}
    \node[anchor=south west,inner sep=0] (image) at (0,0) {\includegraphics[width=5.2cm]{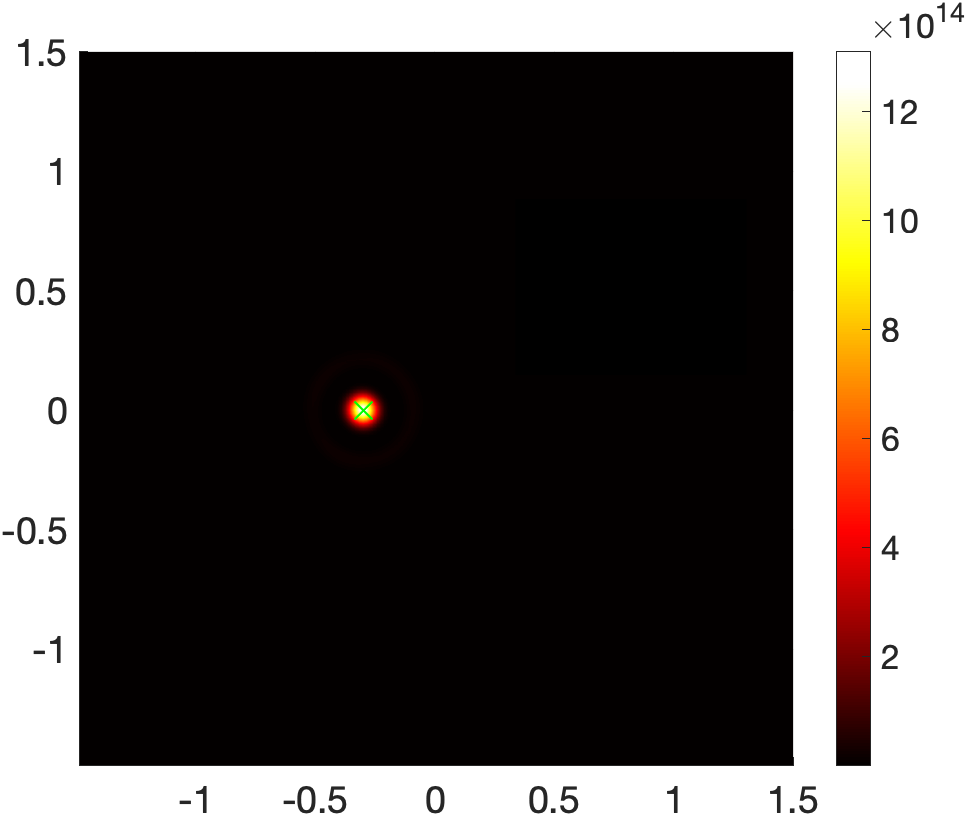}};
    \begin{scope}[x={(image.south east)},y={(image.north west)}]
      \node[anchor=north] at (.53,0) {\phantom{\footnotesize{$x$}}};
      \hspace{0.2cm}
      \node[anchor=south,rotate=90] at (0,.52)  {\phantom{\footnotesize{$x$}}};
    \end{scope}
  \end{tikzpicture}   
    \caption{Reconstruction results for the six point sources in Table \ref{Ta:6points}  {for $k=20, s=4$}. Cross-sectional views restricted to 2D domains of  $\widetilde {I}^\text{im}_4(\z)$ and their updates, with the true locations marked with green crosses.}
    \label{Fi:6points_2d_im}
\end{figure} 

\subsection {Reconstruction for small-volume sources}
\label{part:small}
In this part, we investigate our method to identify points within small-volume sources and determine the directions of constant vectors characterizing the sources. We examine six electromagnetic ball sources with small radii, each having a possibly complex vector with magnitudes that are not comparable. Despite the presence of $10\%$ random noise in the measurement data, the method quickly and accurately locates the centers of these source supports. Furthermore, the estimated directions of the constant vectors exhibit a low relative error, remaining below $10$\%. See further details of the results in Table \ref{Ta:6balls}.

\begin{center}
{\scalebox{0.9}{
    \begin{tabular}{|c|c|c|c|c|}
    \hline
 \rowcolor{lightgray}   \text{True $\x_j$} ($N=6$)&  \text{Computed  $\x_j$} & \text{Radius}  &\text{True $\p_j$} &\text{Computed $\p_j/|\p_j|$ }  \\ 
        \hline
        $(1,0,1.2)$ &$ (1.005, 0.000,1.200)$ &$0.11$ &
      $ 85.170\begin{pmatrix}
         0.317 + 0.234i\\  -0.821\\  - 0.410i
       \end{pmatrix}$
       &  $\begin{pmatrix}
         0.313 + 0.239i \\ -0.819 - 0.002i  \\-0.004 - 0.416i
    \end{pmatrix}$
       \\
          \hline
    $(-1,-0.6,1.2)
    $&$(-1.005, -0.600,1.200) $&  $0.12$ & $57.524 \begin{pmatrix}
       0.574 - 0.244i\\  -0.173 + 0.695i\\   0.312
    \end{pmatrix}$
   
    & $\begin{pmatrix}
          0.577 - 0.246i \\ -0.173 + 0.692i\\   0.310 + 0.006i
    \end{pmatrix}$
        \\
     \hline 
    $(-1,0,-1)$ &  $ (-1.005, 0.000,-0.990)$ &$0.11$ &$26.571 \begin{pmatrix}
      0.565\\   -0.338\\-0.753
    \end{pmatrix}$
   
    & $\begin{pmatrix}
           0.568  \\ -0.330 \\  -0.753
    \end{pmatrix}$
         \\
         \hline
       $(1,0.3,-1)$ &$(1.005, 0.300,-1.005)$ & $0.13$ &$29.547\begin{pmatrix}
       0.334i\\  - 0.575i\\ 0.745i
    \end{pmatrix}$
   
    & $\begin{pmatrix}
         0.344i \\  -0.574i \\  0.744i
    \end{pmatrix}$
    \\
    \hline
    $(1.1,-0.7,0)$ &$(1.095, -0.705,0.000)$ & $0.1$ & $25.593 \begin{pmatrix}
     0.508 + 0.351i\\  -0.468 + 0.586i\\   - 0.234i
    \end{pmatrix}$
   
    & $\begin{pmatrix}
        0.506 + 0.347i \\ -0.468 + 0.590i   \\0.0000 - 0.234i
    \end{pmatrix}$
    \\
    \hline
        $(0,0.5,0)$ &$ (0.000, 0.495,0.000)$& $0.11$ & $ 19.712\begin{pmatrix}
       -0.482 + 0.386i\\   - 0.579i\\   0.531
    \end{pmatrix}$
   
    & $\begin{pmatrix}
           -0.479 + 0.410i \\  0.000 - 0.574i \\  0.522 + 0.0000i
    \end{pmatrix}$
    \\
    \hline
    \end{tabular}}
    \captionof{table}{Reconstruction results for six small-volume ball sources  {for $k=20, s=4$}, each characterized by a (possibly complex) constant vector.   
    \label{Ta:6balls}}}
    \end{center}
\subsection{Reconstruction with different wavenumbers}
\label{part:k}
 {In this part, we present numerical examples for reconstructing point sources with different wavenumbers. The data includes $10\%$ of random noise and the power $s=4$ is maintained in the imaging functions.}  {We first consider three point sources with the same true locations and true moment vectors as in Table \ref{Ta:3points}.
Reconstruction results for $k=8$ are shown in Table \ref{Ta:3points_k8} and Figure \ref{3points_k8}.}  {Comparing these with 
the results for  $k=20$   in Table \ref{Ta:3points}, the relative errors in the computed locations
range from $1.093\%-1.175\%$ for $k=8$, whereas they are slightly lower, ranging from $0.372\%-0.466\%$ for $k=20$. Moreover, the relative errors in the computed moment vectors for $k=20$
range from $1.150\%-4.140\%$, while for $k=8$, they increase to $3.811\%-8.921\%$ but remain within a reasonable range under noisy data. For $k=30$, the results slightly improve compared to those for $k=20$.
}

\begin{center}
{\scalebox{0.9}{
    \begin{tabular}{|c|c|c|c|}
    \hline
 \rowcolor{lightgray}    \text{True location} &  \text{Computed location}& \text{True moment} & \text{Computed moment}   \\
        \hline
     $(-0.9,0,1)$ &  $(-0.915, 0.000,1.005) $ & $\begin{pmatrix}
         -2.5\\4\\-3
     \end{pmatrix}$  &  $\begin{pmatrix}
      -2.404\\   3.543\\   -3.175 

     \end{pmatrix}$  
        \\
     \hline 
     $(-1,0.75,-1)$&$(-0.990, 0.735,-1.005)$ & $\begin{pmatrix}
         -1+3i\\5+4i\\3
     \end{pmatrix}$  
     & $\begin{pmatrix}
            -0.817+3.340i     \\   4.861+3.932i \\   2.838-0.011i
     \end{pmatrix}$
         \\
         \hline
        $(1.1,-0.3,-1)$&$(1.095, -0.285,-1.005)$ 
        &$\begin{pmatrix}
            4.5i\\-5\\3-2i
        \end{pmatrix}$
    &$ \begin{pmatrix}
      0.129+4.462i \\  -5.230-0.036i  \\  3.111-1.996i  
    \end{pmatrix}$ 
    \\
    \hline     
    \end{tabular}}
    \captionof{table}{ {Reconstruction results for three point sources for $k=8, s=4$. }\label{Ta:3points_k8}}} 
    \end{center} 

 \begin{figure}[h]
    \centering
     \subfloat[True location]{
    \begin{tikzpicture}
    \node[anchor=south west,inner sep=0] (image) at (0,0) {\includegraphics[width=5cm]{3points3dtrue1.png}};
    \begin{scope}[x={(image.south east)},y={(image.north west)}]
      \node[anchor=north] at (.9,0.11) {{\footnotesize{$x$}}};
      \node[anchor=south] at (0.12,.06) {\footnotesize{$y$}};
      \node[anchor=south] at (0.05,0.83) {\footnotesize{$z$}};      
    \end{scope}
  \end{tikzpicture}}  
     \hspace{0.55cm}
     \subfloat[Real parts reconstruction]{
    \begin{tikzpicture}
    \node[anchor=south west,inner sep=0] (image) at (0,0) {\includegraphics[width=5cm]{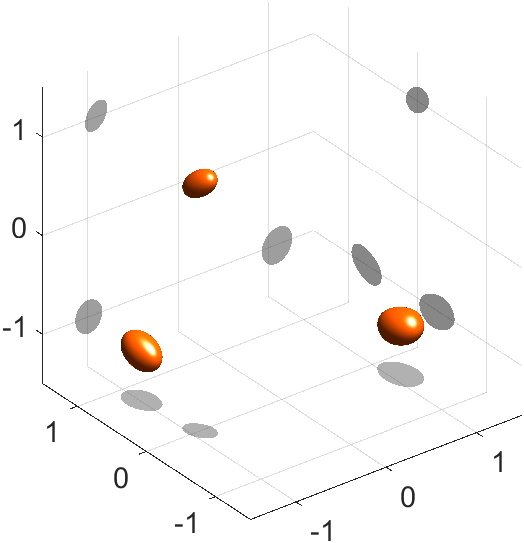}};
    \begin{scope}[x={(image.south east)},y={(image.north west)}]
      \node[anchor=north] at (.9,0.11) {\phantom{\footnotesize{$x$}}};
      \node[anchor=south] at (0.12,.06) {\phantom{\footnotesize{$y$}}};
      \node[anchor=south] at (0.05,0.83) {\phantom{\footnotesize{$x$}}};      
    \end{scope}
  \end{tikzpicture}}  
  \hspace{0.55cm}
     \subfloat[Imaginary parts reconstruction]{
    \begin{tikzpicture}
    \node[anchor=south west,inner sep=0] (image) at (0,0) {\includegraphics[width=5cm]{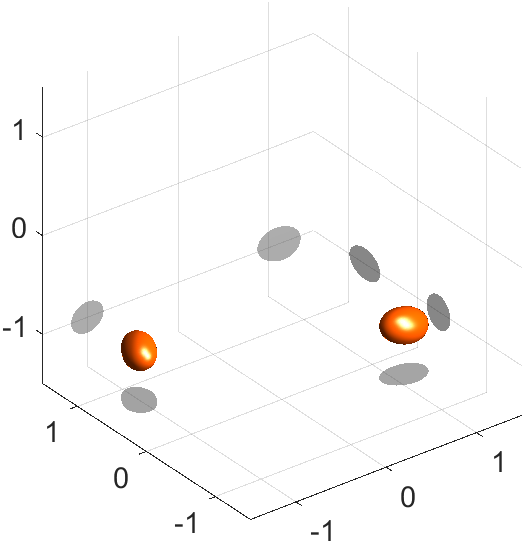}};
    \begin{scope}[x={(image.south east)},y={(image.north west)}]
      \node[anchor=north] at (.9,0.11) {\phantom{\footnotesize{$x$}}};
      \node[anchor=south] at (0.12,.06) {\phantom{\footnotesize{$x$}}};
      \node[anchor=south] at (0.05,0.83) {\phantom{\footnotesize{$x$}}};      
    \end{scope}
  \end{tikzpicture}}  
    \caption{
     {Reconstruction results for the three point sources in Table \ref{Ta:3points_k8} {for $k=8, s=4$}. Isosurface visualizations for the true locations in (a), for $\widetilde{I}^\text{re}_4(\z)$ in (b), and for $\widetilde{I}^\text{im}_4(\z)$ in (c). }
    }
    \label{3points_k8}
\end{figure}

 {Next, we determine six point sources  with the same true locations and true moment vectors as in Table \ref{Ta:6points} at different wavenumbers. Reconstruction results for $k=12$ are shown in Table \ref{Ta:6points_k12}. The computed locations for $k=8$ and $k=12$ are almost similar to those for $k=20$ in Table \ref{Ta:6points}, with low relative errors not exceeding $1\%$. The relative errors in computed moment vectors range between $0.742\%-3.796\%$ for $k=20$, increasing to $2.221\%-9.641\%$ for $k=12$, and further to $4.183\%-11.180\%$ for $k=8$. In addition, for $k=30$, the errors in reconstruction results are slightly lower than those for $k=20$. These results indicate that larger wavenumbers enhance the resolution of the imaging functions, thereby improving the accuracy of source identification. 
}%
    \begin{center}
{\scalebox{0.9}{
    \begin{tabular}{|c|c|c|c|}
    \hline
 \rowcolor{lightgray}    \text{True location} $(N=6)$&  \text{Computed location} & \text{True moment vector} & \text{Computed moment vector} \\
        \hline
     $(-1.2,0,-1)$ &  $(-1.200, 0.000,-1.005)$ & $\begin{pmatrix}
        80+11i \\  50+16i \\  -32i
     \end{pmatrix}$  &  $\begin{pmatrix}
       79.566+10.473i      \\        48.102+15.847i    \\     0.850-32.516i
     \end{pmatrix}$  
        \\
     \hline 
     $(0.6,-1,-1)$&$(0.600, -0.990,-0.990)$& $\begin{pmatrix}
          12-23i\\  35 \\ 3+60i
     \end{pmatrix}$  
     & $\begin{pmatrix}
      11.983-22.953i    \\   35.059-0.650i     \\    3.07+62.092i
     \end{pmatrix}$
         \\
         \hline
        $(1,0.5,0)$&$(0.990, 0.495,0.000)$ &$\begin{pmatrix}
             -6\\  7+40i\\ -18+5i
        \end{pmatrix}$
    &$ \begin{pmatrix}
        -5.614-1.183i        \\       5.716+39.283i \\     -18.209+5.457i
    \end{pmatrix}$
    \\
    \hline
        $(-0.3,0,0)$&$(-0.300, 0.000,0.000)$ &$\begin{pmatrix}
             -5i\\  12\\ 9+14i
        \end{pmatrix}$
    &$ \begin{pmatrix}
              -0.532-3.415i    \\   12.206+0.393i     \\   8.444+14.919i

    \end{pmatrix}$\\
      \hline
       $(-1,0.8,1)$&$(-1.005, 0.795,0.990)$ &$\begin{pmatrix}
             7-26i\\  -2\\ 8 
        \end{pmatrix}$
    &$ \begin{pmatrix}          
   6.244-25.928i   \\    -2.131+0.771i  \\      8.027-0.419i
    \end{pmatrix}$\\
      \hline
       $(0,-1,1)$&$(0.000, -1.005,1.005)$ &$\begin{pmatrix}
            25 \\10\\6

        \end{pmatrix}$
    &$ \begin{pmatrix}          
  24.771     \\   8.743    \\    5.376
    \end{pmatrix}$\\
      \hline
    \end{tabular}}
    \captionof{table}{  {Reconstruction results for six point sources for $k=12, s=4$}.  
    \label{Ta:6points_k12}} }
    \end{center}

\subsection{Reconstruction with different exponent values $s$}
\label{part:s}
%
 {We provide numerical examples with different values of $s$ in the imaging functions to determine three and six point sources. The data includes $10\%$ of random noise and the wavenumber is chosen at $k=20$. 
Using the same base functions $I(z,\mathbf{e}_i)$ with $i=1,2,3$ as examples in section \ref{part:point}, the computed locations and moment vectors for $s=2$ and $s=6$ are identical to those for $s=4$, with relative errors much less than $10\%$.  Therefore, the reconstruction results for $s=2$ and $s=6$ can be found in Table $\ref{Ta:3points}$ for three point sources and Table $\ref{Ta:6points}$ for six point sources. Figures \ref{3points_k20_s2} and \ref{3points_k20_s6}  display the reconstructions for three point sources for $s=2$ and $s=6$, respectively.
It is worth noting that reconstructing locations depends on $s$ in the imaging functions while reconstructing moment vectors does not. 
 For $s=1$, the relative errors in computed locations for three sources reach up to $28\%$. These results demonstrate the accuracy and robustness of our algorithm when $s$ is large enough. 
Moreover, adjusting different $s$ values in our algorithm is easy and computationally cheap.}
\begin{figure}[h]
    \centering
     \subfloat[True location]{
    \begin{tikzpicture}
    \node[anchor=south west,inner sep=0] (image) at (0,0) {\includegraphics[width=5cm]{3points3dtrue1.png}};
    \begin{scope}[x={(image.south east)},y={(image.north west)}]
      \node[anchor=north] at (.9,0.11) {{\footnotesize{$x$}}};
      \node[anchor=south] at (0.12,.06) {\footnotesize{$y$}};
      \node[anchor=south] at (0.05,0.83) {\footnotesize{$z$}};      
    \end{scope}
  \end{tikzpicture}}  
     \hspace{0.55cm}
     \subfloat[Real parts reconstruction]{
    \begin{tikzpicture}
    \node[anchor=south west,inner sep=0] (image) at (0,0) {\includegraphics[width=5cm]{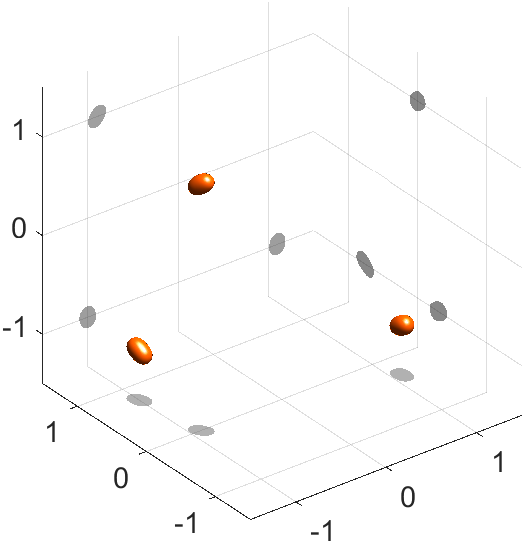}};
    \begin{scope}[x={(image.south east)},y={(image.north west)}]
      \node[anchor=north] at (.9,0.11) {\phantom{\footnotesize{$x$}}};
      \node[anchor=south] at (0.12,.06) {\phantom{\footnotesize{$y$}}};
      \node[anchor=south] at (0.05,0.83) {\phantom{\footnotesize{$x$}}};      
    \end{scope}
  \end{tikzpicture}}  
  \hspace{0.55cm}
     \subfloat[Imaginary parts reconstruction]{
    \begin{tikzpicture}
    \node[anchor=south west,inner sep=0] (image) at (0,0) {\includegraphics[width=5cm]{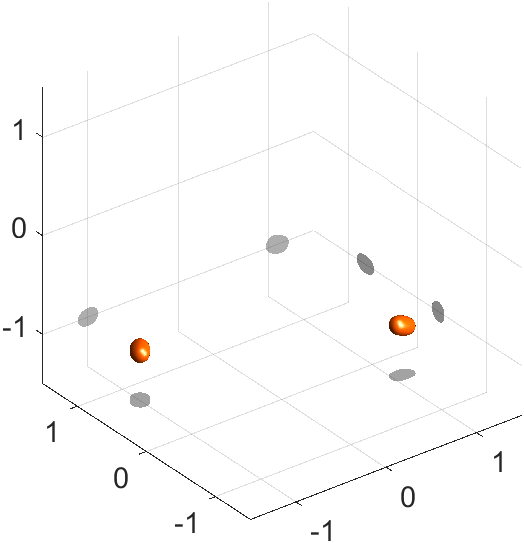}};
    \begin{scope}[x={(image.south east)},y={(image.north west)}]
      \node[anchor=north] at (.9,0.11) {\phantom{\footnotesize{$x$}}};
      \node[anchor=south] at (0.12,.06) {\phantom{\footnotesize{$x$}}};
      \node[anchor=south] at (0.05,0.83) {\phantom{\footnotesize{$x$}}};      
    \end{scope}
  \end{tikzpicture}}  
     \caption{
     {Reconstruction results for the three point sources in Table \ref{Ta:3points} {for $k=20, s=2$}. Isosurface visualizations for the true locations in (a), for $\widetilde{I}^\text{re}_2(\z)$ in (b), and for $\widetilde{I}^\text{im}_2(\z)$ in (c). }
    }
    \label{3points_k20_s2}
\end{figure}

\begin{figure}[h]
    \centering
     \subfloat[True location]{
    \begin{tikzpicture}
    \node[anchor=south west,inner sep=0] (image) at (0,0) {\includegraphics[width=5cm]{3points3dtrue1.png}};
    \begin{scope}[x={(image.south east)},y={(image.north west)}]
      \node[anchor=north] at (.9,0.11) {{\footnotesize{$x$}}};
      \node[anchor=south] at (0.12,.06) {\footnotesize{$y$}};
      \node[anchor=south] at (0.05,0.83) {\footnotesize{$z$}};      
    \end{scope}
  \end{tikzpicture}}  
     \hspace{0.55cm}
     \subfloat[Real parts reconstruction]{
    \begin{tikzpicture}
    \node[anchor=south west,inner sep=0] (image) at (0,0) {\includegraphics[width=5cm]{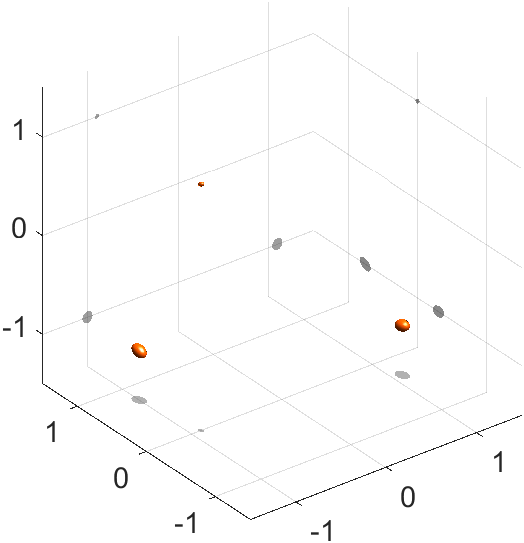}};
    \begin{scope}[x={(image.south east)},y={(image.north west)}]
      \node[anchor=north] at (.9,0.11) {\phantom{\footnotesize{$x$}}};
      \node[anchor=south] at (0.12,.06) {\phantom{\footnotesize{$y$}}};
      \node[anchor=south] at (0.05,0.83) {\phantom{\footnotesize{$x$}}};      
    \end{scope}
  \end{tikzpicture}}  
  \hspace{0.55cm}
     \subfloat[Imaginary parts reconstruction]{
    \begin{tikzpicture}
    \node[anchor=south west,inner sep=0] (image) at (0,0) {\includegraphics[width=5cm]{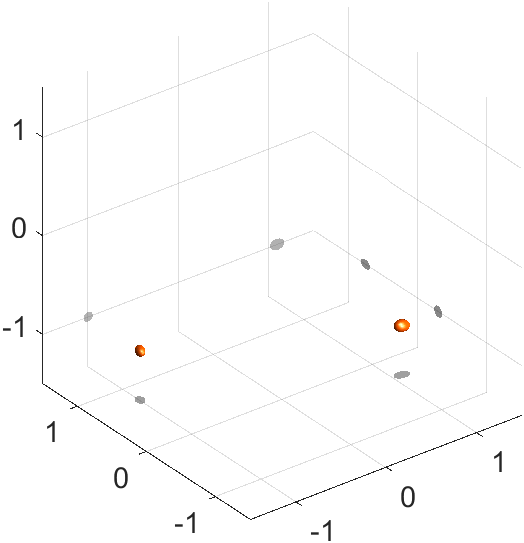}};
    \begin{scope}[x={(image.south east)},y={(image.north west)}]
      \node[anchor=north] at (.9,0.11) {\phantom{\footnotesize{$x$}}};
      \node[anchor=south] at (0.12,.06) {\phantom{\footnotesize{$x$}}};
      \node[anchor=south] at (0.05,0.83) {\phantom{\footnotesize{$x$}}};      
    \end{scope}
  \end{tikzpicture}}  
     \caption{
     {Reconstruction results for the three point sources in Table \ref{Ta:3points} {for $k=20, s=6$}. Isosurface visualizations for the true locations in (a), for $\widetilde{I}^\text{re}_6(\z)$ in (b), and for $\widetilde{I}^\text{im}_6(\z)$ in (c). }
    }
    \label{3points_k20_s6}
\end{figure}

\subsection{Reconstruction with high noise levels} 
 {In this section, we demonstrate the stability of our method by testing its performance under high levels of noise added to synthetic data. We consider the wavenumber $k=20$ and the exponent $s=4$ in the imaging functions.
Reconstruction results for three point sources with \(30\%\) and \(50\%\) noise in the data are presented in Tables \ref{Ta:3points_30noise} and \ref{Ta:3points_50noise}, respectively. The computed source locations remain accurate across all noise levels, with relative errors below \(0.466\%\). While the computed moment vectors are slightly more affected by noise, their relative errors remain within a reasonable range, increasing slightly from \(1.125\%-4.140\%\) for \(10\%\) noise (see Table \ref{Ta:3points}), to \(1.116\% - 4.558\%\) for \(30\%\) noise, and further to \(1.129\% - 4.639\%\) for \(50\%\) noise. These results demonstrate the robustness of the proposed method in identifying sources despite high noise levels in the data.
}
\label{part: noise}
\begin{center}
{\scalebox{0.9}{
    \begin{tabular}{|c|c|c|c|}
    \hline
 \rowcolor{lightgray}    \text{True location} &  \text{Computed location} & \text{True moment} & \text{Computed moment}   \\
        \hline
     $(-0.9,0,1)$ &  $(-0.900, 0.000,1.005)$ &$\begin{pmatrix}
         -2.5\\4\\-3
     \end{pmatrix}$  &  $\begin{pmatrix}
     -2.428\\3.774\\-3.093

     \end{pmatrix}$ 
        \\
     \hline 
     $(-1,0.75,-1)$&$(-1.005, 0.750,-1.005)$& $\begin{pmatrix}
         -1+3i\\5+4i\\3
     \end{pmatrix}$  
     & $\begin{pmatrix}
             -0.923 + 3.006i\\   5.001 + 4.030i\\   2.992i - 0.024i
     \end{pmatrix}$
         \\
         \hline
        $(1.1,-0.3,-1)$&$(1.095, -0.300,-1.005)$ &$\begin{pmatrix}
            4.5i\\-5\\3-2i
        \end{pmatrix}$
    &$ \begin{pmatrix}
         -0.001 + 4.539i  \\-4.951 + 0.016i \\  2.974 - 2.049i
    \end{pmatrix}$
    \\
    \hline
     
    \end{tabular}}
    \captionof{table}{ {Reconstruction results for three point sources  for $k=20, s=4$, and $30\%$ random noise was added to the data}. \label{Ta:3points_30noise}}}
    \end{center}  
\begin{center}
{\scalebox{0.9}{
    \begin{tabular}{|c|c|c|c|}
    \hline
 \rowcolor{lightgray}    \text{True location} &  \text{Computed location} & \text{True moment} & \text{Computed moment}  \\
        \hline
     $(-0.9,0,1)$ &  $(-0.900, 0.000,1.005)$ & $\begin{pmatrix}
         -2.5\\4\\-3
     \end{pmatrix}$  &  $\begin{pmatrix}
     -2.458\\       3.753 \\      -3.067

     \end{pmatrix}$  
        \\
     \hline 
     $(-1,0.75,-1)$&$(-1.005, 0.750,-1.005)$& $\begin{pmatrix}
         -1+3i\\5+4i\\3
     \end{pmatrix}$  
     & $\begin{pmatrix}
              -0.910 + 2.987i \\  4.973 + 4.032i  \\ 3.001 + 0.012i

     \end{pmatrix}$
         \\
         \hline
        $(1.1,-0.3,-1)$&$(1.095, -0.300,-1.005)$ &$\begin{pmatrix}
            4.5i\\-5\\3-2i
        \end{pmatrix}$
    &$ \begin{pmatrix}
           -0.019 +      4.566i    \\   -4.958 +      0.053i     \\    3.03 -      2.011i

    \end{pmatrix}$
    \\
    \hline
     
    \end{tabular}}
    \captionof{table}{ {Reconstruction results for three point sources for $k=20, s=4$, and $50\%$ random noise was added to the data}. \label{Ta:3points_50noise}}}
    \end{center}

\subsection{Comparison between $\widetilde{\widehat{I}_s}(\z)$ and $\widetilde{{I}_s}(\z)$ in imaging sources near the data boundary}
\label{nearfield}
We now evaluate the performance of the imaging function $\widetilde{\widehat{I}_s}(\z)$ defined via $\widehat I(\mathbf{z},\q)$ in \eqref{tildehatI} and compare it with the imaging function $\widetilde{I}_s(\z)$  defined via $I(\mathbf{z},\q)$ in \eqref{tildeI} to reconstruct point sources with real moment vectors. We first consider the case when the Cauchy data are measured on a boundary close to the sampling domain. 
The radius of the measurement sphere $\partial \Omega$ is set to $2.4$, and the sampling cube for our simulations remains $[-1.5,1.5]^3$. Our example involves four point sources, with two located near the measurement boundary and the other two further away. As shown in Table \ref{ihatipts}, $\widetilde{\widehat{I}_s}(\z)$ fails to locate accurately all four sources.
This may be due to the fact that sampling point $\z$ approaches the blow-up singularity of the Green’s tensor in the formulation of $\widehat{I}(\mathbf{z},\q)$ in \eqref{eq:Ihat} when imaging near the data boundary. However, $\widetilde{I}_s(\z)$  accurately determines source locations. 
Now, when the boundary data is not close to the sampling domain, increasing the radius of the measurement sphere to at least $2.6$ in this example, allows $\widetilde{\widehat{I}_s}$ to provide accurate estimates, with results similar to those of $\widetilde{I}_s(\z)$.
\begin{center}
{\scalebox{0.9}{
    \begin{tabular}{|c|c|c|c|}
    \hline
 \rowcolor{lightgray}    \text{True location } ($N=4$) &\text{True moment vector} &  \text{Computed location by   $\widetilde{\widehat{I}_s}(\z)$} & \text{Computed location by $\widetilde{I}_s(\z)$}  \\
        \hline
      $(-1.3,-1.3,-1.3)$& $(-1,-1,-1)$ &$(-1.425, -1.275,-1.275)$
     & $(-1.305, -1.305,-1.305)$
        \\
     \hline 
   
      $(1.4,1.4,1.4)$ & $(1,1,1)$& $(1.470, 1.230,1.275)$
 & $(1.455, 1.365,1.410)$  
         \\
         \hline
          $(-1,-1,0)$&$(-1,-1,-1)$&$(-1.425, -1.275,1.275)$ &$(-1.005, -1.005,0.000)$
       
    \\
     \hline
        $(0.7,0.5,0)$&$(1,1,1)$&$(1.470, -1.230,1.275)$ 
    & $ (0.705, 0.495,0.000)$
    \\
    \hline
    \end{tabular}}
    \captionof{table}{Comparison of the reconstruction results using $\widetilde{\widehat{I}_s}(\z)$ and $\widetilde{I}_s(\z)$ for identifying point sources located near the data boundary  {for $k=20, s=4$}. 
    \label{ihatipts}}} 
    \end{center}   
\section{Conclusion}
\label{se:conclu}
We introduce a new numerical method for solving the inverse source problem associated with Maxwell's equations at a fixed frequency Cauchy data. We justify the imaging functions and establish a computational algorithm to determine point sources with (possibly complex) moment vectors that can have notably different magnitudes. Even in the presence of noise in the data, our method accurately and efficiently localizes electromagnetic point sources and small-volume sources.  It can also estimate the moment vectors of point sources and the directions of these vectors for small-volume sources with a low relative error, demonstrating the robustness of our approach.  
The method allows imaging at arbitrary distances from the data boundary, and it offers easy implementation and low computational costs. This makes it a promising approach for addressing inverse source problems.

\vspace{0.5cm}

\noindent{\bf Acknowledgments:} The research of I. Harris is partially supported by  NSF  Grant DMS-2107891. 
The research of T. Le and D.-L. Nguyen is partially supported by NSF Grant DMS-2208293. 

\bibliographystyle{plain}
\bibliography{Imylib}
\end{document}